\documentclass[11pt,reqno]{amsart}
\pdfoutput=1 % needed for arXiv

% language packages
\usepackage[english]{babel}
\usepackage[utf8]{inputenc}
\usepackage[T1]{fontenc}
\usepackage{lmodern} \normalfont %to load T1lmr.fd 
\DeclareFontShape{T1}{lmr}{bx}{sc} { <-> ssub * cmr/bx/sc }{}
\usepackage{microtype}	% for improved spacing between words and letters

% mathpackages
\usepackage{amssymb}
\usepackage{amsmath}
\usepackage{comment}
\usepackage{amsthm}
\usepackage{algorithm}
\usepackage{algpseudocode}
\usepackage{mathtools}
\usepackage{mathrsfs}
\usepackage{accents} % correct spacing for accents in sub- and superscript

\usepackage{subdepth} % correct positions for sub- and superscripts
\usepackage{etoolbox}
\usepackage{siunitx}
\usepackage{dsfont} % for special 1, \mathds{1}
\sisetup{%
	list-units = brackets,
	range-units = brackets,
	range-phrase = {-},
	per-mode=symbol,
	retain-zero-exponent=true
}

% bold math
\usepackage{bm}

% table
\usepackage{tabularx}

% image/color packages
\usepackage{graphicx}
\usepackage{color}
\usepackage[dvipsnames]{xcolor}
\usepackage{circuitikz}
\usepackage{tikz}
\usetikzlibrary{external}
\usetikzlibrary{spy}
\usetikzlibrary{calc,positioning,shapes}
\usetikzlibrary{patterns,decorations.pathmorphing,decorations.markings}
\usetikzlibrary{external}
%\tikzexternalize
\usepackage{pgfplots}
\pgfplotsset{compat=newest}
\usepackage[margin=10pt,font=small,labelfont=bf,labelsep=endash]{caption}
\usepackage{subcaption}

% layout packages
%\usepackage[left=3cm,right=3cm,top=3cm,bottom=3cm]{geometry}
\textheight=215mm
\textwidth=150mm
\evensidemargin=30.0mm
\oddsidemargin=30.0mm
\topmargin=-1mm
\hoffset=-25.4mm
\usepackage{booktabs}
\usepackage{paralist}
\usepackage{soul}
\usepackage{listings}

\numberwithin{equation}{section}

% packages for algorithms
\usepackage{algorithm}
\usepackage{algorithmicx}
\usepackage{algpseudocode}

% helper packages
\usepackage{cite}
\usepackage{multirow} % for multirows in tabular environment
\usepackage{enumitem} % for labelling items in enumerate
\setlist[enumerate]{label=(\roman*)}
\usepackage{xspace}

% linking packages
\usepackage[colorlinks,linkcolor=teal,citecolor=teal,urlcolor=teal]{hyperref}
\usepackage[nameinlink,noabbrev]{cleveref}

% theorem styles
\theoremstyle{plain}
\newtheorem{theorem}{Theorem}[section]
\newtheorem{proposition}[theorem]{Proposition}
\AddToHook{env/proposition/begin}{\crefalias{theorem}{proposition}}
\Crefname{proposition}{Proposition}{Propositions}

\newtheorem{lemma}[theorem]{Lemma}
\AddToHook{env/lemma/begin}{\crefalias{theorem}{lemma}}
\Crefname{lemma}{Lemma}{Lemmata}

\AddToHook{env/corollary/begin}{\crefalias{theorem}{corollary}}
\Crefname{corollary}{Corollary}{Corollaries}

\newtheorem{remark}[theorem]{Remark}
\AddToHook{env/remark/begin}{\crefalias{theorem}{remark}}
\Crefname{remark}{Remark}{Remarks}

\newtheorem{definition}[theorem]{Definition}
\AddToHook{env/definition/begin}{\crefalias{theorem}{definition}}
\Crefname{definition}{Definition}{Definitions}

\newtheorem{assumption}[theorem]{Assumption}
\AddToHook{env/assumption/begin}{\crefalias{theorem}{assumption}}
\Crefname{assumption}{Assumption}{Assumptions}

\AddToHook{env/example/begin}{\crefalias{theorem}{example}}
\Crefname{example}{Example}{Examples}

\AddToHook{env/problem/begin}{\crefalias{theorem}{problem}}
\Crefname{problem}{Problem}{Problems}

% ifthenelse for preprint version
\usepackage{ifthen}
\newboolean{preprint}
\setboolean{preprint}{false} 	% true for preprint, false for submitted version

%-----------------------------------------------------------------------------%
% standard macros
%-----------------------------------------------------------------------------%
% spaces
\newcommand{\N}{\ensuremath\mathbb{N}}

\newcommand{\R}{\ensuremath\mathbb{R}}
\newcommand{\C}{\ensuremath\mathbb{C}}

% function spaces
\newcommand{\smoothFunctions}[3][]{\ifthenelse{\equal{#1}{}}{\mathcal{C}^{#2}}{\mathcal{C}_{#1}^{#2}}(#3)}

% Distributional solution theory
\newcommand{\dist}[1][]{\ifthenelse{\equal{#1}{}}{\mathbb{D}}{#1_{\mathbb{D}}}}

% matrices
\newcommand{\GL}[1]{\mathrm{GL}_{#1}}

\newcommand{\Spsd}[1]{\mathcal{S}^{#1}_{\succeq}}

\newcommand{\Frob}{\mathrm{F}}

% transpose
\newcommand{\T}{\ensuremath\mathsf{T}}

% integrals
\newcommand{\integrate}[1]{\mathrm{d}#1}

\newcommand{\ds}{\integrate{s}}
\newcommand{\dt}{\integrate{t}}

% derivatives, embeddings

% \newcommand{\dds}{\ensuremath{\dd{s}}}

% inverse

% math operators
\DeclareMathOperator{\ee}{e}

\DeclareMathOperator{\Kern}{ker}

\DeclareMathOperator{\image}{img}
\newcommand{\img}{\image}

\DeclareMathOperator{\spann}{span}

\DeclareMathOperator{\vec2}{vec}

% mathcals

\newcommand{\calB}{\mathcal{B}}
\newcommand{\calC}{\mathcal{C}}

\newcommand{\calG}{\mathcal{G}}

\newcommand{\calJ}{\mathcal{J}}

\newcommand{\calL}{\mathcal{L}}
\newcommand{\calM}{\mathcal{M}}
\newcommand{\calN}{\mathcal{N}}
\newcommand{\calO}{\mathcal{O}}
\newcommand{\calP}{\mathcal{P}}
\newcommand{\calQ}{\mathcal{Q}}
\newcommand{\calR}{\mathcal{R}}
\newcommand{\calS}{\mathcal{S}}
\newcommand{\calT}{\mathcal{T}}
\newcommand{\calU}{\mathcal{U}}
\newcommand{\calV}{\mathcal{V}}
\newcommand{\calW}{\mathcal{W}}

%%% Generic Alphabet
\newcommand{\stx}{\ensuremath{\bm{x}}}
\newcommand{\state}{\stx}
\newcommand{\stateDim}{n}
\newcommand{\stateRed}{\reduce{\state}}
\newcommand{\stateDimRed}{r}
\newcommand{\inp}{\ensuremath{\bm{u}}}
\newcommand{\inpDim}{m}
\newcommand{\out}{\ensuremath{\bm{y}}}
\newcommand{\outDim}{p}

%%% Fat alphabet
% Small letters
\newcommand{\fa}{\ensuremath{\bm{a}}}
\newcommand{\fb}{\ensuremath{\bm{b}}}

\newcommand{\fe}{\ensuremath{\bm{e}}}

\newcommand{\fp}{\ensuremath{\bm{p}}}

\newcommand{\fu}{\ensuremath{\bm{u}}}
\newcommand{\fv}{\ensuremath{\bm{v}}}
\newcommand{\fw}{\ensuremath{\bm{w}}}
\newcommand{\fx}{\ensuremath{\bm{x}}}

\newcommand{\fz}{\ensuremath{\bm{z}}}

% Capital letters
\newcommand{\fA}{\ensuremath{\bm{A}}}
\newcommand{\fB}{\ensuremath{\bm{B}}}
\newcommand{\fC}{\ensuremath{\bm{C}}}
\newcommand{\fD}{\ensuremath{\bm{D}}}
\newcommand{\fE}{\ensuremath{\bm{E}}}

\newcommand{\fG}{\ensuremath{\bm{G}}}

\newcommand{\fI}{\ensuremath{\bm{I}}}
\newcommand{\fJ}{\ensuremath{\bm{J}}}
\newcommand{\fK}{\ensuremath{\bm{K}}}

\newcommand{\fM}{\ensuremath{\bm{M}}}
\newcommand{\fN}{\ensuremath{\bm{N}}}

\newcommand{\fP}{\ensuremath{\bm{P}}}

\newcommand{\fR}{\ensuremath{\bm{R}}}
\newcommand{\fS}{\ensuremath{\bm{S}}}
\newcommand{\fT}{\ensuremath{\bm{T}}}
\newcommand{\fU}{\ensuremath{\bm{U}}}
\newcommand{\fV}{\ensuremath{\bm{V}}}
\newcommand{\fW}{\ensuremath{\bm{W}}}
\newcommand{\fX}{\ensuremath{\bm{X}}}

\newcommand{\fZ}{\ensuremath{\bm{Z}}}

% Greek small letters

\newcommand{\flambda}{\ensuremath{\bm{\lambda}}}

% skipped omikron

\newcommand{\frho}{\ensuremath{\bm{\rho}}}

% some Greek capital letters

\newcommand{\fPi}{\ensuremath{\bm{\Pi}}}
\newcommand{\fSigma}{\ensuremath{\bm{\Sigma}}}

%-----------------------------------------------------------------------------%
% own definitions
%-----------------------------------------------------------------------------%

%Fat Gramians
\newcommand{\fcP}{\mathbf{\calP}}
\newcommand{\fcQ}{\mathbf{\calQ}}
\newcommand{\zeroVec}{\mathbf{0}}
\newcommand{\zeroMat}{\mathbf{0}}

%Wong-matrices
\newcommand{\difW}{\hat{\mathbf{V}}}
\newcommand{\impW}{\hat{\mathbf{W}}}

%Pert-Theory

\newcommand{\fcB}{\mathbf{\calB}}
\newcommand{\fcC}{\mathbf{\calC}}

\newcommand{\switch}{q}
\newcommand{\switchingSet}{\calJ}
\newcommand{\diff}{\mathrm{diff}}
\newcommand{\imp}{\mathrm{imp}}
\newcommand{\indDAE}{\nu}
\newcommand{\tol}{\texttt{tol}}

\newcommand{\system}{\Sigma}
\newcommand{\reduce}[1]{\tilde{#1}}
\newcommand{\systemRed}{\reduce{\system}}
\newcommand{\switchedSys}{\system_{\switch}}
\newcommand{\switchedSysRed}{\systemRed_{\switch}}
\newcommand{\reduceBis}[1]{\hat{#1}}
\newcommand{\switchedSysRedJumps}{\hat \system_{\switch}}

% matrices for the generalized Lyapunov equation (GLE)
\newcommand{\gleMat}[1]{\mathbf{\mathscr{#1}}}

\newcommand{\lyapOper}{\calL}
\newcommand{\gleOper}{\Pi}
\newcommand{\tfinal}{t_{\mathrm{f}}}

% temp matrices

% indexes
\newcommand{\indKS}{\ell}

% Variables
\newcommand{\SwDifState}{\hat{\fz}}
\newcommand{\SwDifStateRed}{\tilde{\fz}}
\newcommand{\SwDifStateRedBis}{\bar{\fz}}
\newcommand{\StateTran}{\fP}

\newcommand{\stateBal}{\bar{\state}}
\newcommand{\bfX}{\mathbf{x}}
\newcommand{\gleG}{\gleMat{G}}
\newcommand{\gleH}{\gleMat{H}}
\newcommand{\matlab}{{\sc Matlab}}

% for plots
\newcommand{\lineWidth}{1.2pt}
\newcommand{\imageWidth}{2.0in}
\newcommand{\imageHeight}{1.8in}
\definecolor{mycolor1}{rgb}{0.00000,0.44700,0.74100}% blue
\definecolor{mycolor2}{rgb}{0.85000,0.32500,0.09800}% red
\definecolor{mycolor3}{rgb}{0.92900,0.69400,0.12500}% orange/yellow
\definecolor{mycolor4}{rgb}{0.46600,0.67400,0.18800}% green
\definecolor{mycolor5}{rgb}{0.49400,0.18400,0.55600}% purple

%-----------------------------------------------------------------------------%
% abbreviations
%-----------------------------------------------------------------------------%
\newcommand{\abbr}[1]{\textsf{#1}\xspace}
\newcommand{\FOM}{\abbr{FOM}}
\newcommand{\GLE}{\abbr{GLE}}
\newcommand{\GLEs}{\abbr{GLEs}}

\newcommand{\LMIs}{\abbr{LMIs}}
\newcommand{\ROM}{\abbr{ROM}}
\newcommand{\ROMs}{\abbr{ROMs}}
\newcommand{\MOR}{\abbr{MOR}}
\newcommand{\SDAE}{switched~\abbr{DAE}}
\newcommand{\SDAEs}{switched~\abbr{DAEs}}
\newcommand{\DAE}{\abbr{DAE}}
\newcommand{\DAEs}{\abbr{DAEs}}
\newcommand{\QWF}{\abbr{QWF}}
\newcommand{\ODE}{\abbr{ODE}}
\newcommand{\ODEs}{\abbr{ODEs}}

\newcommand{\SVD}{\abbr{SVD}}
\newcommand{\SLS}{\abbr{SLS}}
\newcommand{\SLSs}{\abbr{SLSs}}
\newcommand{\PBR}{\abbr{PBR}}

%-----------------------------------------------------------------------------%
\title{Balancing-based model reduction for switched descriptor systems}
\author[M.~Manucci \and B.~Unger]{Mattia Manucci${}^\star$ \and Benjamin Unger${}^\star$}

\address{${}^{\star}$ Institute for Applied and Numerical Mathematics, Karlsruhe Institute of Technology, 76131 Karlsruhe, Germany}
\email{\{mattia.manucci,benjamin.unger\}@kit.edu}

\newcommand{\ourKeywords}{model order reduction, differential-algebraic equations, switched systems, balanced truncation, error bound, generalized Lyapunov equation}
%\thanks{}
%
\date{\today}
%\keywords{\ourKeywords}
%-----------------------------------------------------------------------------%

%-----------------------------------------------------------------------------%
%--BEGINDOCUMENT-------------------------------------------------------------%
%-----------------------------------------------------------------------------%
\begin{document}
	
\begin{abstract}
	We propose a novel projection-based model order reduction (MOR) algorithm for a broad class of switched linear descriptor systems. Our approach integrates the reformulation strategy of [Hossain \& Trenn, DAE Panel, 2024], which converts the switched descriptor system into a switched ordinary differential equation featuring state- and input-dependent jumps as well as impulsive components at the switching instants in the output, with the piecewise balanced truncation framework for switched linear systems introduced in [Manucci \& Unger, arXiv:2601.13039, 2026]. 

The central idea is to additionally reformulate the switched system with input jumps and impulsive output as a standard switching system that exhibits only state-dependent jumps, while appropriately augmenting the input and output dimensions to preserve the original input–output behavior. Based on this reformulation, we develop new stopping criteria for the stationary iterative scheme employed to approximate the solution of a generalized Lyapunov equation, a key ingredient for the computation of the projection matrices in our MOR procedure. 

Finally, numerical experiments are conducted to demonstrate and assess the effectiveness of the proposed reduction methodology.
\end{abstract}
\maketitle
{\footnotesize \textsc{Keywords:} \ourKeywords}
	
{\footnotesize \textsc{AMS subject classification:} 65F45, 65F55, 65P99, 65L80, 93A15, 93B99}
%
%30E05 = Moment problems and interpolation problems in the complex plane
%37M99 = Dynamical systems and ergodic theory
%65F45 = Numerical methods for matrix equations
%65F55 = Numerical methods for low-rank matrix approximation; matrix compression
%65P99 = Numerical problems in dynamical systems
%65L80 = Numerical methods for differential-algebraic equations 
%93A30 = Mathematical modeling (models of systems, model-matching, etc.)
%93A15 = Large-scale systems
%93B99 = Systems theory
%
	
%-----------------------------------------------------------------------------%
\section{Introduction}
We consider systems of switched \emph{differential-algebraic equations} (\DAE) of the form
\begin{equation}
	\label{eqn:sDAE}
	\switchedSys \quad \left\{\quad \begin{aligned}
		\fE_{\switch(t)}  \dot{\stx}(t)  &= \fA_{\switch(t)} \stx(t) +\fB_{\switch(t)}\inp(t), & \stx(t_0) &= \zeroVec, \\
		\out(t) &= \fC_{\switch(t)}\stx(t),\\
	\end{aligned}\right.
\end{equation}
where $\switch\colon \R\to \switchingSet\vcentcolon=\{1,\ldots,M\}$ is the external switching signal, which we assume to be an element of the set of allowed switching signals
\begin{equation}
	\label{eqn:suitableSwitchingSignals}
	\calS \vcentcolon= \{\switch\colon \R\to \switchingSet \mid \switch \text{ is right continuous with locally finite number of jumps}\}.
\end{equation}
The symbols $\stx(t)\in\R^{\stateDim}$, $\inp(t)\in\R^{\inpDim}$, and $\out(t)\in\R^{\outDim}$ denote the \emph{state}, the controlled \emph{input}, and the measured \emph{output} at time $t$,  respectively. The system matrices $\fE_{j}\in\R^{\stateDim\times \stateDim}$, $\fA_{j}\in\R^{\stateDim\times \stateDim}$, $\fB_j\in \R^{\stateDim\times \inpDim}$, and $\fC_j\in\R^{\outDim\times \stateDim}$ correspond to the \DAE active in mode $j\in\switchingSet$. We emphasize that $\fE_{j}$ might be singular, and we assume that the finite eigenvalues of the matrix pair $(\fE_j,\fA_j)$ have negative real part for all $j\in\calJ$. We refer to~\eqref{eqn:sDAE} as the \emph{full-order model} (\FOM). Sample applications include robot manipulators, traffic management, automatic gear shifting, and power systems; see, for instance, \cite{Che04} and the references therein. 

If \eqref{eqn:sDAE} must be evaluated repeatedly, as, e.g., in simulations arising from an optimal control problem \cite{KarMUV24}, for varying inputs or switching signals, or when matrix equations or inequalities in a synthesis setting need to be solved, then a large state dimension $\stateDim$ makes this a computationally demanding or even infeasible task. In such scenarios, one can rely on \emph{model order reduction} (\MOR) and replace \eqref{eqn:sDAE} with the \emph{reduced-order model} (\ROM)
\begin{equation}
	\label{eqn:sDAE:ROM}
	\switchedSysRed \quad \left\{\quad \begin{aligned}
		\reduce{\fE}_{\switch(t)}  \dot{\stateRed}(t)  &= \reduce{\fA}_{\switch(t)} \stateRed(t) + \reduce{\fB}_{\switch(t)}\inp(t), &	\stateRed(t_0) &= \zeroVec, \\
		\reduce{\out}(t) &= \reduce{\fC}_{\switch(t)}\stateRed(t),\\
	\end{aligned}\right.
\end{equation}
with $\reduce{\fE}_{j}, \reduce{\fA}_{j}\in\R^{\stateDimRed\times \stateDimRed}$, $\reduce{\fB}_j\in \R^{\stateDimRed\times \inpDim}$, and $\reduce{\fC}_j\in\R^{\outDim\times \stateDimRed}$, and $\stateDimRed\ll \stateDim$. In many cases, see, for instance, \cite{Ant05}, the reduced system matrices are obtained via Petrov-Galerkin projection; i.e., one constructs matrices $\fV,\fW\in\R^{\stateDim\times\stateDimRed}$ and then defines
\begin{align}
	\label{eqn:sDAE:ROM:matrices}
	\reduce{\fE}_j &\vcentcolon= \fW^\T \fE_j \fV, &
	\reduce{\fA}_j &\vcentcolon= \fW^\T \fA_j \fV, &
	\reduce{\fB}_j &\vcentcolon= \fW^\T \fB_j, &
	\reduce{\fC}_j &\vcentcolon= \fC_j \fV.
\end{align}
The goal of \MOR is thus to derive the matrices $\fW,\fV$ in a computationally efficient and robust way, such that the error $\out-\reduce{\out}$ is small in some given norm.

\begin{remark}
In this work, we focus on system modes that have the same dimension $\stateDim$. Modes with differing dimensions can also be handled, as long as appropriate transition matrices from mode $i$ to mode $j$ are specified for every $i,j\in\switchingSet$.
\end{remark}

%-----------------------------------------------------------------------------%
\subsection{Main contributions}
This work extends balancing-based model order reduction to switched descriptor systems with arbitrary externally prescribed switching signals of the form~\eqref{eqn:suitableSwitchingSignals}. The proposed methodology builds upon the recently introduced \emph{piecewise balancing reduction} (\PBR) framework for switched linear systems~\cite[Sec.~4]{ManU26}. Its key ingredient is a reformulation of the switched descriptor system that is amenable to the \PBR framework.
The main contributions of this paper are twofold.
\begin{enumerate}
	\item Motivated by \cite{Hos22,HosT23}, we derive in \Cref{Sec:NEW-FORMULATION} an equivalent reformulation of the \SDAE~\eqref{eqn:sDAE}, which we subsequently modify in \Cref{eqn:ext:reach,eqn:ext:obs} with augmented input and output matrices. Based on these modified systems, we characterize reachability and observability in \Cref{teo:reachability,teo2bis}.
	\item Building on these characterizations, we develop in \Cref{sec:PBR-sDAE} the first projection-based balancing method for switched descriptor systems with arbitrary switching signals of the form~\eqref{eqn:suitableSwitchingSignals}. The proposed method extends \PBR, including its a posteriori error certification, to this broader class of systems.
\end{enumerate}

The construction of the projection spaces requires the solution of \emph{generalized Lyapunov equations} (\GLEs). To this end, we build on the methodology in \cite[Sec.~3]{ManU26} and derive a relative stopping criterion (see \Cref{teo1}) that enables control of the relative error in the approximation of the \GLE solution.
The effectiveness of the proposed methodology is substantiated through numerical experiments conducted on a constrained mass--spring--damper system \Cref{subsec:MSD}, as well as on an artificial \SDAE constructed by alternately switching between the constrained mass–spring–damper system and a semidiscretized Stokes system \Cref{subsec:Stokes}.

%-----------------------------------------------------------------------------%
\subsection{Literature review and state-of-the-art}
Our method is mainly inspired by \cite{ManU26} and \cite{HosT23}, where \cite{HosT23} is based on the thesis \cite{Hos22}. The latter two publications appear to be the only references for \MOR for switched descriptor systems. Nevertheless, \cite{HosT23} is restricted to \SDAE where the switching sequence is known a priori and no error certification is provided. Let us also mention \cite{SajCZS13}, where a dimensionality reduction of a switched descriptor system is realized via one of the Wong sequences defined in \eqref{eqn:WongSequences}, which is subsequently used to obtain stability conditions \cite{SajCZS19}. 

The core idea of \cite{Hos22} is the reformulation of the \SDAE~\eqref{eqn:sDAE} as a switched linear system of \emph{ordinary differential equations} (\ODEs) with jumps and impulses, and subsequently application of the midpoint-based balanced truncation introduced in \cite{HosT24} on this system. The literature of \MOR for switched \ODEs involves several different approaches. In particular, the authors of \cite{PapP14,PapP16} propose constructing the \ROM for each mode independently of the other modes, completely ignoring the transition from one mode to another. Moreover, if a state-dependent switching signal is allowed, any approximation of the \SDAE may be arbitrarily poor. Both phenomena are detailed with examples in \cite{SchU18}. In \cite{WuZ09}, an approach based on a set of coupled linear matrix inequalities is proposed, which becomes infeasible in a large-scale context. Nevertheless, the matrix inequalities can be used to guarantee quadratic stability of the reduced system and to derive an error bound \cite{PetWL13}. Instead of matrix inequalities, \cite{ShaW12,GosPAF18} propose solving a set of coupled Lyapunov equations to compute Gramians, which can then be used for a balancing-based model reduction. There is no guarantee that this approach yields a solution \cite{Lib03}, and in large-scale settings, the computational complexity may be prohibitive. If the Gramians for each mode can be simultaneously diagonalized, then classical balanced truncation methods can be adapted, as discussed in \cite{MonTC12}.
In contrast, the methods reported in \cite{SchU18,PonGB20} rely on reformulating the switched system as a non-switched system by suitably interpreting the switching signal as a control input. In \cite{SchU18}, the switched system is recast as a linear system so that standard methods can be applied, whereas in \cite{PonGB20}, it is recast as a bilinear system and a balanced truncation approach is employed. In this paper, we closely follow the second strategy. We emphasize that both methods provide a-priori error certification if certain non-generic conditions hold. If prior information about the switching sequence is available, then the method in \cite{PonGB20} can be further specialized, as reported in \cite{GosPBA20}. The \MOR method introduced in \cite{ManU26}, which relies on piecewise constant-in-time projection matrices obtained through a balancing procedure, circumvents the non-generic conditions by establishing an a posteriori error bound that often can be evaluated efficiently. Interpolation-based techniques are discussed in \cite{ScaA16} for hybrid and in \cite{BasPWL16} for switched systems. For a data-driven approach to obtaining reduced models, we refer the reader to \cite{GosPA18}. Let us emphasize that model reduction is closely related to realization theory. For switched systems, the associated connections are illustrated in \cite{PetG22} and the references therein. We conclude by mentioning \cite{PeiK19}, where the authors discretize the control variable to obtain a switched system of autonomous partial differential equations, which they approximate with \ROMs in a model predictive control framework.

%-----------------------------------------------------------------------------%
\subsection{Organization of the manuscript}
After this introduction, \Cref{sec:preliminaries} is dedicated to presenting the main mathematical tools required for the remainder of the work. \Cref{sec:MORforDAE} forms the central part of the paper, where we demonstrate how a \ROM for a \SDAE can be obtained by first deriving a \ROM for a suitably defined switched \ODE with state jumps. In \Cref{sec:numericalDetails}, we examine several numerical aspects that are essential for our \MOR procedure. In addition, we derive a novel stopping criterion for a rigorous bound on the relative error in approximating the solution of certain matrix equations, which are required for computing the projection basis used in \MOR. Finally, in \Cref{sec:examples}, we demonstrate the effectiveness of the proposed reduction procedure by means of numerical experiments.
%-----------------------------------------------------------------------------%
\subsection{Notation}
The symbols $\fI$ and $\GL{\stateDim}$ denote the identity matrix of appropriate size and the set of $\stateDim\times \stateDim$ real nonsingular matrices, respectively. Let $\fA\in\R^{\stateDim\times\stateDim}$. Then we call~$\fA$ asymptotically stable if all eigenvalues of $\fA$ are contained in the open left-half complex plane. Moreover, we write $\fA\succeq \zeroMat$ or $\fA \preceq \zeroMat$ if $\fA$ is positive or negative semidefinite, respectively. The preimage of $\fM\in\R^{\stateDim_1\times\stateDim_2}$ with respect to a linear subspace $\calN \subseteq \R^{\stateDim_1}$ is denoted with
\begin{equation}\label{eqn:pre:img}
	\fM^{-1}(\calN)\;\vcentcolon=\;\{\fx\in\R^{\stateDim_2} \mid \fM\fx\in\calN\}.
\end{equation}
The smallest $\fA$-invariant subspace containing $\calN$ and the largest $\fA$-invariant subspace contained in $\calN$ are
\begin{align*}
	\langle \fA \mid \calN  \rangle &\vcentcolon= \calN + \fA \calN +\ldots+\fA^{n-1} \calN, &
	\langle \calN \mid \fA  \rangle &\vcentcolon= \calN \cap \fA^{-1}\calN\cap\ldots\cap\fA^{-(n-1)}\calN,
\end{align*} 
where the addition symbol between two sets has to be interpreted as the union of those sets.

%-----------------------------------------------------------------------------%
\section{Preliminaries} \label{sec:preliminaries}
	We begin this section by reviewing key concepts related to \DAEs and \SDAEs in \Cref{subsec:DAE} and \Cref{subsec:sDAE}, respectively. Then, in \Cref{subsec:reachObserv}, we present the notions of reachable and observable sets for general switched systems, thereby encompassing both standard \SLSs and \SDAEs. By standard \SLS, we mean systems of the form \eqref{eqn:sDAE}–\eqref{eqn:suitableSwitchingSignals} for which the matrix $\fE_{\switch(t)}$ is invertible and $\fE_{\switch(t)}^{-1}\fA_{\switch(t)}$ is asymptotically stable for all $t$. In the recent study \cite{ManU26}, a novel projection-based \MOR methodology is proposed for this class of systems. The method utilizes piecewise-defined projection matrices that are constructed from approximations of the solutions to suitably formulated \GLEs. A detailed exposition of this approach is provided in \Cref{subsec:PBR-SLS}.
%-----------------------------------------------------------------------------%
\subsection{Differential-algebraic equations}
\label{subsec:DAE}
If the switching signal $q\in\calS$ is constant, then the \SDAE~\eqref{eqn:sDAE} reduces to a \DAE in control form, which motivates the definition of the non-switched descriptor system
\begin{equation}
	\label{eqn:DAE}
	\Sigma_{\DAE}\quad\left\{\quad \begin{aligned}
		\fE \dot{\stx}(t)  &= \fA \stx(t)+\fB \inp(t),\quad\stx(t_0)=\stx_0, \\
		\out(t) &= \fC \stx(t),\\
	\end{aligned}\right.
\end{equation}
with $\fE$, $\fA\in \R^{\stateDim\times \stateDim}$, $\fB\in\R^{\stateDim\times \inpDim}$, $\fC\in \R^{\outDim\times \stateDim}$, and initial value $\stx_0\in\R^{\stateDim}$. To ensure the existence and uniqueness of solutions of the \DAE~\eqref{eqn:DAE}, the matrix pair $(\fE,\fA)$ must satisfy certain properties; see, for instance, \cite[Cha.~2]{KunM06}. In more detail, we assume that the matrix pair $(\fE, \fA)$ is \emph{regular}, i.e., $\det(s\fE-\fA)\in\C[s]\setminus\{0\}$. In this case, one can show that in the space of piecewise-smooth distributions \cite{Tre09}, the initial trajectory problem associated with the \DAE~\eqref{eqn:DAE} has a unique solution for any initial value and any control input. If a classical solution is required, then, in addition to a regular matrix pair, the initial value and the control input have to satisfy a consistency condition and the control input needs to be sufficiently smooth; cf.~\cite{KunM06} and the forthcoming \Cref{rem:classicalSolution}. Regularity of a matrix pair can be characterized by the Weierstra\ss{} form \cite{Gan59} or the slightly simplified \emph{quasi-Weierstra\ss{} form} (\QWF) \cite{BerIT12}.
	
\begin{theorem}[Quasi-Weierstrass Form, \cite{BerIT12}]\label{teo:QWF}
	A matrix pair $(\fE,\fA)\in\R^{\stateDim\times\stateDim}\times\R^{\stateDim\times\stateDim}$ is regular if and only if there exist matrices $\fS,\fT\in\GL{\stateDim}$ such that
	\begin{equation}
		\label{eqn:QWF}
		\left(\fS\fE\fT, \fS\fA\fT\right) = \Bigg(   \begin{bmatrix}
			\fI &\zeroMat\\
			\zeroMat &\fN
		\end{bmatrix},\begin{bmatrix}
			\fJ&\zeroMat\\
			\zeroMat & \fI
		\end{bmatrix}  \Bigg),
	\end{equation}
	where $\fN\in\R^{\stateDim_{\fN} \times \stateDim_{\fN}}$ is nilpotent with nilpotency index $\indDAE$ and $\fJ\in\R^{\stateDim_{\fJ} \times \stateDim_{\fJ} }$, with $\stateDim_{\fJ}=\stateDim-\stateDim_{\fN}$. %The decoupling \eqref{eqn:QWF} is called quasi-Weierstrass form.
\end{theorem}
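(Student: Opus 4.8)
The plan is to prove the two implications separately; the direction ``\eqref{eqn:QWF} $\Rightarrow$ regular'' is immediate, and all of the content lies in the converse. For the easy direction, suppose $(\fS\fE\fT,\fS\fA\fT)$ has the block form in \eqref{eqn:QWF}. Since $\fN$ is nilpotent, $\det(s\fN-\fI)$ is the nonzero constant $(-1)^{\stateDim_{\fN}}$, so $\det(s\,\fS\fE\fT-\fS\fA\fT)=(-1)^{\stateDim_{\fN}}\det(s\fI-\fJ)$, which is a nonzero polynomial (monic of degree $\stateDim_{\fJ}$ up to sign). Writing $s\fE-\fA=\inv\fS\,(s\,\fS\fE\fT-\fS\fA\fT)\,\inv\fT$ and taking determinants changes this only by the nonzero constant $\det(\inv\fS)\det(\inv\fT)$, hence $\det(s\fE-\fA)\in\C[s]\setminus\{0\}$ and $(\fE,\fA)$ is regular.

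For the converse, assume $(\fE,\fA)$ is regular, i.e.\ $\det(s\fE-\fA)\not\equiv 0$. This polynomial has at most $\stateDim$ real roots, so I can fix $\lambda\in\R$ with $\lambda\fE-\fA\in\GL{\stateDim}$. Set $\hat\fE\vcentcolon=\invb{\lambda\fE-\fA}\fE$; from $\fA=\lambda\fE-(\lambda\fE-\fA)$ one gets $\invb{\lambda\fE-\fA}\fA=\lambda\hat\fE-\fI$, so after the invertible left transformation $\invb{\lambda\fE-\fA}$ the whole pair is encoded by the single matrix $\hat\fE$. Next I would invoke the Fitting (core--nilpotent) decomposition of $\hat\fE$: $\R^{\stateDim}=\calV\oplus\calW$ with $\calV\vcentcolon=\img(\hat\fE^{\stateDim})$ and $\calW\vcentcolon=\Kern(\hat\fE^{\stateDim})$, both $\hat\fE$-invariant, with $\hat\fE|_{\calV}$ invertible and $\hat\fE|_{\calW}$ nilpotent; let $\indDAE$ be its index. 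Picking $\fT=[\fV\ \fW]\in\GL{\stateDim}$ with $\img\fV=\calV$ and $\img\fW=\calW$ then gives $\inv\fT\hat\fE\fT=\diag(\fE_1,\fN_0)$ with $\fE_1$ invertible and $\fN_0$ nilpotent of index $\indDAE$, and consequently $\inv\fT\invb{\lambda\fE-\fA}\fA\fT=\diag(\lambda\fE_1-\fI,\lambda\fN_0-\fI)$.

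It remains to normalize the two diagonal blocks: take $\fS\vcentcolon=\diag\bigl(\inv{\fE_1},\invb{\lambda\fN_0-\fI}\bigr)\,\inv\fT\,\invb{\lambda\fE-\fA}$, which is invertible because $\lambda\fN_0-\fI$ is (as $\fN_0$ is nilpotent). Then $\fS\fE\fT=\diag(\fI,\fN)$ with $\fN\vcentcolon=\invb{\lambda\fN_0-\fI}\fN_0$, and $\fS\fA\fT=\diag(\lambda\fI-\inv{\fE_1},\fI)$, so setting $\fJ\vcentcolon=\lambda\fI-\inv{\fE_1}$ puts the pair into the form \eqref{eqn:QWF} with $\stateDim_{\fJ}=\dim\calV$ and $\stateDim_{\fN}=\dim\calW$. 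Finally, expanding $\invb{\lambda\fN_0-\fI}=-\sum_{k\ge 0}\lambda^{k}\fN_0^{k}$ (a finite sum since $\fN_0$ is nilpotent) shows $\fN$ is a polynomial in $\fN_0$ with leading term $-\fN_0$, hence nilpotent of the same index $\indDAE$.

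The one step that is not routine bookkeeping is the Fitting decomposition---producing a single subspace splitting of $\R^{\stateDim}$ that is simultaneously $\hat\fE$-invariant, makes one restriction invertible and the other nilpotent---but this is the standard primary decomposition associated with factoring (a power of) the minimal polynomial of $\hat\fE$ as $t^{k}q(t)$ with $q(0)\ne 0$, so no new machinery is needed. A more \DAE-intrinsic alternative---presumably the route of \cite{BerIT12}---avoids choosing $\lambda$ altogether and instead builds $\fV,\fW$ from the stabilized limits $\calV^{*}=\bigcap_i\calV_i$ and $\calW^{*}=\bigcup_i\calW_i$ of the Wong sequences $\calV_0=\R^{\stateDim}$, $\calV_{i+1}=\fA^{-1}(\fE\calV_i)$, $\calW_0=\{0\}$, $\calW_{i+1}=\fE^{-1}(\fA\calW_i)$; there the crux is that regularity forces $\calV^{*}\oplus\calW^{*}=\R^{\stateDim}$ together with invertibility of $[\fE\fV\ \fA\fW]$, after which $\fS\vcentcolon=\inv{[\fE\fV\ \fA\fW]}$ and $\fT\vcentcolon=[\fV\ \fW]$ do the job.
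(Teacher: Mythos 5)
Your proposal is correct. Note, however, that the paper does not prove this statement at all: it is quoted verbatim from \cite{BerIT12}, and the construction the paper actually relies on is the Wong-sequence route recorded immediately afterwards in \Cref{thm:QWF:Wong}, where $\fT=[\difW,\impW]$ and $\fS=[\fE\difW,\fA\impW]^{-1}$ are built from the limits $\calV^\star$ and $\calW^\star$ of \eqref{eqn:WongSequences}. Your main argument takes a genuinely different (and classical) route: fix $\lambda$ outside the finite spectrum, pass to $\hat\fE=\invb{\lambda\fE-\fA}\fE$, and apply the core--nilpotent (Fitting) decomposition of the single matrix $\hat\fE$. All the steps check out: the easy direction via $\det(s\fN-\fI)=(-1)^{\stateDim_{\fN}}$, the identity $\invb{\lambda\fE-\fA}\fA=\lambda\hat\fE-\fI$, the block-diagonalization by $\fT=[\fV\ \fW]$, the normalization by $\fS$, and the observation that $\fN=\invb{\lambda\fN_0-\fI}\fN_0=-\fN_0(\fI+\lambda\fN_0+\cdots)$ is $\fN_0$ times a commuting invertible factor, hence nilpotent of the same index $\indDAE$. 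What your route buys is brevity and elementarity (only the primary decomposition of one matrix is needed); what the Wong-sequence route buys is a coordinate-free, $\lambda$-free description of the two invariant subspaces, which is precisely what the rest of the paper needs to define the projectors \eqref{eqn:projSel} independently of the choice of $\fS,\fT$ and to compute them sparsely in \Cref{subsec:num:Wong}. (The two constructions are reconciled by the fact that the Fitting subspaces $\img(\hat\fE^{\stateDim})$ and $\Kern(\hat\fE^{\stateDim})$ coincide with $\calV^\star$ and $\calW^\star$ for regular pairs, so your last remark correctly identifies the approach of \cite{BerIT12}.)
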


\begin{remark}\label{rmk:AS}
	The assumption that the finite eigenvalues of the matrix pair $(\fE,\fA)$ have a negative real part implies that the matrix $\fJ$ appearing in \eqref{eqn:QWF} is asymptotically stable. 
\end{remark}

\ifthenelse{\boolean{preprint}}{
The \QWF allows a decoupling of the \DAE~\eqref{eqn:DAE} into an \emph{ordinary differential equation} (\ODE) and a nilpotent \DAE
\begin{subequations}
	\label{eqn:DAE:decoupled}
	\begin{align}
		\label{eqn:DAE:slow}
		\dot{\fv} &= \fJ\fv+\fB_{\fv}\inp,\\
		\label{eqn:DAE:fast}
		\fN\dot{\fw} &= \fw+\fB_{\fw}\inp,
	\end{align}
\end{subequations}
which can be used to derive an explicit solution formula; see \cite[Cha.~2]{KunM06}. In particular, the fast subsystem~\eqref{eqn:DAE:fast} imposes a so-called \emph{consistency condition} on the initial value $\stx_0$ for a classical solution to exist \cite[Cha.~2]{KunM06}.}{} One way to construct the matrices~$\fS,\fT\in\GL{\stateDim}$ is given via the \emph{Wong sequences} \cite{Won74}, which are defined as 
\begin{subequations}
	\label{eqn:WongSequences}
	\begin{align}
		\calV^0\; &\vcentcolon=\; \R^n, & \calV^{i+1} \;&\vcentcolon=\; \fA^{-1}(\fE\calV^i), &  i\in\N,\\
		\calW^0 \;&\vcentcolon= \;\{0\}, & \calW^{j+1} \;&\vcentcolon=\; \fE^{-1}(\fA\calW^j), & j\in\N,\label{eqn:WongSequencesImp}
	\end{align}
\end{subequations}
where we use the notation for the preimage as defined in~\eqref{eqn:pre:img}. After finitely many steps, the sequences in \eqref{eqn:WongSequences} converge, and the limits are given by
\begin{equation}
	\label{eqn:WongSequences:limits}
	\calV^\star \vcentcolon= \bigcap_{i\in\N} \calV^i \qquad\text{and}\qquad \calW^\star \vcentcolon= \bigcup_{i\in\N} \calW^i.
\end{equation}
	
\begin{theorem}[{\QWF via Wong sequences, \cite[Thm.~2.6]{BerIT12}}]
	\label{thm:QWF:Wong}  
	Consider a regular matrix pair $(\fE,\fA)$ with corresponding Wong limits $\calV^\star$ and $\calW^\star$. For any full rank matrices~$\difW$ and $\impW$ such that $\img(\difW)= \calV^\star$ and $\img(\impW) = \calW^\star$, the matrices 
	\begin{equation}\label{eqn:Wong:mat}
		\fT = [\difW, \impW],\quad \fS = [\fE\difW, \fA\impW]^{-1}
	\end{equation}
	are invertible and transform $(\fE,\fA)$ into \QWF~\eqref{eqn:QWF}.
\end{theorem}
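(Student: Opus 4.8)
The plan is to reduce the statement to two structural facts about the Wong limits and then a short bookkeeping computation. The facts are: (a) the state space splits as $\R^{\stateDim}=\calV^\star\oplus\calW^\star$; and (b) $\fE$ restricted to $\calV^\star$ and $\fA$ restricted to $\calW^\star$ are injective, and $\fE\calV^\star\oplus\fA\calW^\star=\R^{\stateDim}$. Granting (a) and (b), $\fT=[\difW,\impW]$ and $[\fE\difW,\fA\impW]$ are invertible, so $\fS$ is well defined and invertible, and the block form \eqref{eqn:QWF} drops out by inspecting $\fS\fE\fT$ and $\fS\fA\fT$ one block column at a time.

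First I would record the elementary behaviour of the sequences in \eqref{eqn:WongSequences}: these are monotone nested chains of subspaces of $\R^{\stateDim}$, hence stabilize after finitely many steps, and at the limit $\calV^\star=\fA^{-1}(\fE\calV^\star)$ and $\calW^\star=\fE^{-1}(\fA\calW^\star)$; in particular $\fA\calV^\star\subseteq\fE\calV^\star$ and $\fE\calW^\star\subseteq\fA\calW^\star$. The decomposition (a) is the only place where regularity is genuinely used, and I expect it to be the main obstacle. I would prove it by choosing $\lambda\in\R$ with $\lambda\fE-\fA$ nonsingular (possible since $\det(s\fE-\fA)$ is a nonzero polynomial with finitely many roots) and passing to the left-equivalent pencil $(\fF,\lambda\fF-\fI)$ with $\fF\coloneqq(\lambda\fE-\fA)^{-1}\fE$; left multiplication by an invertible matrix leaves the Wong sequences unchanged, and a short induction gives $\calV^i=\img\fF^i$ and $\calW^i=\Kern\fF^i$ for all $i$, whence $\calV^\star=\img\fF^{\stateDim}$, $\calW^\star=\Kern\fF^{\stateDim}$, and (a) becomes exactly Fitting's decomposition for the endomorphism $\fF$. (Alternatively, one may take the existence of the quasi-Weierstrass form from \Cref{teo:QWF} as given and compute the Wong sequences in those coordinates, where the splitting and the block sizes are read off immediately.) Statement (b) is then formal: if $\fx\in\calV^\star$ with $\fE\fx=\zeroVec$, then $\fE\fx=\zeroVec\in\fA\calW^\star$, so $\fx\in\fE^{-1}(\fA\calW^\star)=\calW^\star$ and $\fx=\zeroVec$ by (a); symmetrically $\Kern\fA\cap\calW^\star=\{\zeroVec\}$; and since then $\dim\fE\calV^\star+\dim\fA\calW^\star=\dim\calV^\star+\dim\calW^\star=\stateDim$, the inclusions $\fA\calV^\star\subseteq\fE\calV^\star$ and $\fE\calW^\star\subseteq\fA\calW^\star$ force $\fE\calV^\star\oplus\fA\calW^\star=\R^{\stateDim}$.

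For the computation I would argue column-block-wise. Since the columns of $\fE\difW$ and of $\fA\impW$ together form a basis of $\R^{\stateDim}$, multiplying by $\fS=[\fE\difW,\fA\impW]^{-1}$ gives $\fS\fE\difW=\begin{smallbmatrix}\fI\\\zeroMat\end{smallbmatrix}$ and $\fS\fA\impW=\begin{smallbmatrix}\zeroMat\\\fI\end{smallbmatrix}$. From $\fA\difW\in\fA\calV^\star\subseteq\fE\calV^\star=\img(\fE\difW)$ there is a unique $\fJ$ with $\fA\difW=\fE\difW\fJ$, so $\fS\fA\difW=\begin{smallbmatrix}\fJ\\\zeroMat\end{smallbmatrix}$; from $\fE\impW\in\fE\calW^\star\subseteq\fA\calW^\star=\img(\fA\impW)$ there is a unique $\fN$ with $\fE\impW=\fA\impW\fN$, so $\fS\fE\impW=\begin{smallbmatrix}\zeroMat\\\fN\end{smallbmatrix}$. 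Stacking the block columns yields \eqref{eqn:QWF} with $\stateDim_{\fJ}=\dim\calV^\star$ and $\stateDim_{\fN}=\stateDim-\stateDim_{\fJ}=\dim\calW^\star$. Finally, to see that $\fN$ is nilpotent I would use that the Wong sequences transform covariantly under equivalence: the $\calW$-limit of the block-diagonal pair in \eqref{eqn:QWF} is $\fT^{-1}\calW^\star=\{\zeroVec\}\times\R^{\stateDim_{\fN}}$, while computing that sequence directly from the block structure gives $\calW^j=\{\zeroVec\}\times\Kern\fN^j$; equating the two shows $\Kern\fN^{\indDAE}=\R^{\stateDim_{\fN}}$ for the stabilization index $\indDAE$, i.e.\ $\fN^{\indDAE}=\zeroMat$. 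Everything after (a) is routine linear algebra, so (a) — concretely the direct sum $\R^{\stateDim}=\calV^\star\oplus\calW^\star$ — is where the actual work lies.
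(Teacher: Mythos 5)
The paper does not prove this theorem: it is imported verbatim from \cite[Thm.~2.6]{BerIT12}, so there is no in-house argument to compare against. Your proof is correct and is, in substance, the proof from that reference. The reduction of the splitting $\R^{\stateDim}=\calV^\star\oplus\calW^\star$ to Fitting's decomposition of the single endomorphism $\fF=(\lambda\fE-\fA)^{-1}\fE$ (using that left-equivalence preserves the Wong sequences and that $(\lambda\fE-\fA)^{-1}\fA=\lambda\fF-\fI$) is exactly where regularity enters, and your identities $\calV^i=\img\fF^i$, $\calW^j=\Kern\fF^j$ check out. The block computation of $\fS\fE\fT$ and $\fS\fA\fT$ and the nilpotency argument via covariance of the Wong sequences are also fine. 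The only place I would ask you to add a line is the claim that the inclusions $\fA\calV^\star\subseteq\fE\calV^\star$ and $\fE\calW^\star\subseteq\fA\calW^\star$ together with the dimension count ``force'' $\fE\calV^\star\oplus\fA\calW^\star=\R^{\stateDim}$: as literally stated this is not quite a one-step deduction. Either observe that these inclusions give $(\lambda\fE-\fA)\calV^\star\subseteq\fE\calV^\star$ and $(\lambda\fE-\fA)\calW^\star\subseteq\fA\calW^\star$, so that $\fE\calV^\star+\fA\calW^\star\supseteq(\lambda\fE-\fA)(\calV^\star\oplus\calW^\star)=\R^{\stateDim}$, or argue trivial intersection directly from the fixed-point identity $\calW^\star=\fE^{-1}(\fA\calW^\star)$ (which you recorded earlier): if $\fE v=\fA w$ with $v\in\calV^\star$, $w\in\calW^\star$, then $v\in\calV^\star\cap\calW^\star=\{\zeroMat\}$ and then $w=\zeroMat$ by injectivity. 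Either one-liner closes the step; everything else stands.
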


With these preparations, we can aim for a geometric description of the solution of~\eqref{eqn:DAE} independent of the particular choice of coordinates for $\calV^\star$ and $\calW^\star$. Let us decompose the solution $\stx$ of~\eqref{eqn:DAE} as $\stx = \stx^{\diff} \oplus\stx^{\imp}$ with $\stx^{\diff}(t)\in\calV^\star$ and $\stx^{\imp}(t)\in \calW^\star$ for all $t\in\R$ and define the matrices
\begin{align}
	\label{eqn:projSel}
	\mathbf{\Pi}_{(\fE,\fA)} &\vcentcolon= \fT \begin{bmatrix}
		\fI & \zeroMat \\
		\zeroMat & \zeroMat
	\end{bmatrix} \fT^{-1}, &
	\mathbf{\Pi}^{\diff}_{(\fE,\fA)} &\vcentcolon= \fT \begin{bmatrix}
		\fI & \zeroMat \\
		\zeroMat & \zeroMat
	\end{bmatrix}\fS, &
	\mathbf{\Pi}^{\imp}_{(\fE,\fA)} &\vcentcolon= \fT \begin{bmatrix}
		\zeroMat & \zeroMat \\
		\zeroMat & \fI
	\end{bmatrix}\fS,
\end{align}
which are called the \emph{consistency projector}, the \emph{differential selector}, and the \emph{impulse selector}, respectively. Note that the projectors do not depend on the specific choice of $\fS,\fT\in\GL{\stateDim}$ (see \cite[Sec.~4.2.2]{Tre09-thesis}), and hence the matrices 
\begin{subequations}
	\label{eqn:diffImpMatrices}
	\begin{align}
		\fA^{\diff} &\vcentcolon= \mathbf{\Pi}^{\diff}_{(\fE, \fA)}\fA, & 
		\fB^{\diff} &\vcentcolon= \mathbf{\Pi}^{\diff}_{(\fE, \fA)}\fB, & 
		\fC^{\diff} &\vcentcolon= \fC \mathbf{\Pi}_{(\fE, \fA)},\label{eqn:diffImpMatricesA}\\
		\fE^{\imp} &\vcentcolon= \mathbf{\Pi}^{\imp}_{(\fE, \fA)}\fE, & 
		\fB^{\imp} &\vcentcolon= \mathbf{\Pi}^{\imp}_{(\fE, \fA)}\fB, &
		\fC^{\imp} &\vcentcolon= \fC \left( \fI-  \mathbf{\Pi}_{(\fE, \fA)}\right),
	\end{align}
\end{subequations}
are also independent of $\fS$ and $\fT$. 

For the \SDAE~\eqref{eqn:sDAE} we define accordingly $\mathbf{\Pi}_k \vcentcolon= \mathbf{\Pi}_{(\fE_k,\fA_k)}$, $\mathbf{\Pi}_k^{\diff} \vcentcolon= \mathbf{\Pi}^{\diff}_{(\fE_k,\fA_k)}$, $\mathbf{\Pi}_k^{\imp} \vcentcolon= \mathbf{\Pi}^{\imp}_{(\fE_k,\fA_k)}$, and the corresponding matrices $\fA_k^{\diff}$, $\fB_k^{\diff}$, $\fC_k^{\diff}$, $\fE_k^{\imp}$, $\fB_k^{\imp}$, and $\fC_k^{\imp}$ for $k\in\switchingSet$.

\begin{remark}
	Note that the matrices \eqref{eqn:diffImpMatrices} and \eqref{eqn:Wong:mat} should not be formed explicitly. In particular, in a large-scale context, the sparsity of the system matrices of the switched system may be destroyed by applying the operators in \eqref{eqn:projSel}. For this reason, assuming a sparse representation of \eqref{eqn:Wong:mat} (see the forthcoming discussion in \Cref{subsec:num:Wong}), the matrices \eqref{eqn:diffImpMatrices} should only be used implicitly through suitable matrix-vector products and sparse linear system solves. 
\end{remark}

\begin{remark}
	\label{rem:classicalSolution}
	For the existence of a continuously differentiable solution, the input~$\inp$ needs to be sufficiently smooth, and the initial value $\stx_0\in\R^{\stateDim}$ has to satisfy the \emph{consistency condition}
\begin{equation}
	\label{eqn:consistencyCondition}
		(\fI - \mathbf{\Pi}_{(\fE, \fA)})\stx_0 = \sum_{i=0}^{\indDAE-1}\left(\fE^{\imp}\right)^i\fB^{\imp}\inp^{(i)}(t_0).
\end{equation}
\end{remark}

%-----------------------------------------------------------------------------%	
\subsection{Switched descriptor systems}
\label{subsec:sDAE}

For the switched \DAE~\eqref{eqn:sDAE}, there is a priori no guarantee that the initial value after a switch satisfies the consistency condition~\eqref{eqn:consistencyCondition}. Hence, we cannot expect to assume the existence of classical solutions. Instead, we rely on \emph{piecewise-smooth distributions}, first presented in \cite{Tre09} and applied to switched \DAE in \cite{Tre09-thesis,Tre12}. In particular, regularity of each mode of the \SDAE and the assumption that the switching times do not accumulate is sufficient for the existence of a unique solution to the initial trajectory problem for~\eqref{eqn:sDAE}; see \cite[Cor.~5.2]{Tre12}. For an overview of other distributional solution concepts in the context of \DAEs, we refer to \cite{Tre13}. Asymptotic stability of \SDAEs 
is beyond the purposes of this work, although it is fundamental for practical applications. We refer to \cite{LibT12} for sufficient conditions for asymptotic stability of \SDAE under our class of switching signals defined in~\eqref{eqn:suitableSwitchingSignals} or under sufficiently slow average dwell-time switching. 

To formulate our forthcoming \MOR method, we make use of an equivalent characterization of the switched descriptor system as a switched \ODE with jumps and impulses, which was first derived in \cite{Hos22,HosT23}.
In more detail, let $\switch\in\calS$ and consider the switched \ODE with jumps and Dirac impulses given by
\begin{align}
	\label{eqn:switchedODEjump}
	\left\{\quad\begin{aligned}
		\dot{\fz}(t) &= \fA^{\diff}_{\switch_k}\fz(t) + \fB_{\switch_k}^{\diff}\inp(t), & t\in(t_k,t_{k+1}),\\
		\fz(t_k^{+}) &= \mathbf{\Pi}_{\switch_k}\fz(t_k^{-}) + \mathbf{\Pi}_{\switch_k}\fU_{\switch_k}^{-}(t_k^-), & \fz(t_0^{-}) = 0,\\
		\out(t) &= \fC^{\diff}_{\switch_k}\fz(t) + \fD_{\switch_k}\fU_{\switch_k}(t), & t\in(t_k,t_{k+1}),\\
		\out[t_k] &= -\fC^{\imp}_{\switch_k}\sum_{i=1}^{\indDAE_k-1}(\fE_{\switch_k}^{\imp})^{i}\fz(t_k^{-})\delta_{t_k}^{(i)} + \fU_{\switch_k}^{\imp}(t_k),
	\end{aligned}\right.
\end{align}
where $\switch_k\vcentcolon = \switch(t_k)$ is constant along $(t_k,t_{k+1})$, $\indDAE_{\switch_k}$ is the nilpotency index of matrix pencil $(\fE_{\switch_k},\fA_{\switch_k})$, and
\begin{subequations}
	\label{eqn:def:jum:imp}
	\begin{align}
		\fU_{\switch_k}^{-}(t_k^-) &\vcentcolon= \sum_{i=0}^{\indDAE_k-1}\left(\fE_{\switch_{k-1}}^{\imp}\right)^i\fB_{\switch_{k-1}}^{\imp}\inp^{(i)}(t_k^{-}),\\
		\fU_{\switch_k}^{\imp}(t_k) &\vcentcolon= \fC^{\imp}_{\switch_k}\sum_{i=1}^{\indDAE_k-1}\left(\fE_{\switch_k}^{\imp}\right)^{i}\left(  \fU_{\switch_{k+1}}^{-}(t_k^+)-\fU_{\switch_k}^-(t^-_k) \right)\delta_{t_k}^{(i)},\\
		\fU_{\switch_k}(t) &\vcentcolon= \left[\inp^\T(t),\dot{\inp}^{\T}(t),\ldots,\inp^{{(\indDAE_k-1)}^\T}(t)\right]^\T,\label{eqn:def:jum:imp:c}\\
		\fD_{\switch_k} &\vcentcolon= -\fC^{\imp}_{\switch_k}[\fE_{\switch_k}^{\imp}\fB_{\switch_k}^{\imp},\ldots,(\fE_{\switch_k}^{\imp})^{\indDAE_k-1}\fB_{\switch_k}^{\imp}].
	\end{align}
\end{subequations}
Note that we do not recall the precise definition of left-and right-sided evaluation at $t$, denoted with $t^-$ and $t^+$, the impulsive component denoted with $[t]$ here, and the $i$th derivative of the Dirac impulse $\delta_t^{(i)}$, since these aspects are not relevant in the remainder of the manuscript. Instead, we refer to \cite{Tre09}.
	
\begin{theorem}[{\!\cite[Thm.~7.11]{Hos22}}]
	\label{thm:equivReformulation}
	Assume that the switched \DAE~\eqref{eqn:sDAE} is regular. Then for every switching signal $\switch\in\calS$, the systems~\eqref{eqn:sDAE} and~\eqref{eqn:switchedODEjump} have the same input-output behavior in the space of piecewise-smooth distributions.
\end{theorem}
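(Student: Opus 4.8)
The plan is to fix a switching signal $\switch\in\calS$ and exploit that, by definition of $\calS$, it has only locally finitely many jumps $\ldots<t_{k-1}<t_k<t_{k+1}<\ldots$, so that on each interval of constancy $(t_k,t_{k+1})$ the \SDAE~\eqref{eqn:sDAE} coincides with the non-switched \DAE~\eqref{eqn:DAE} for the regular pencil $(\fE_{\switch_k},\fA_{\switch_k})$. It then suffices to describe the unique piecewise-smooth distributional solution mode by mode, to analyse the isolated switching instants $t_k$ separately, and to reassemble (existence and uniqueness of the solution of~\eqref{eqn:sDAE} in this class being guaranteed, e.g., by \cite[Cor.~5.2]{Tre12}). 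All computations are carried out through the coordinate-free objects of \Cref{subsec:DAE}: the consistency projectors $\mathbf{\Pi}_k$, the selectors $\mathbf{\Pi}^{\diff}_k,\mathbf{\Pi}^{\imp}_k$, and the induced matrices~\eqref{eqn:diffImpMatrices}. Since none of these depends on the choice of $\fS,\fT$ (see the remark after~\eqref{eqn:projSel}), neither does~\eqref{eqn:switchedODEjump}, which settles its well-definedness.

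\emph{Decoupling and the interior and feedthrough terms.} For the active mode $j=\switch_k$, \Cref{teo:QWF,thm:QWF:Wong} let me split the state as $\stx=\stx^{\diff}\oplus\stx^{\imp}$ with $\stx^{\diff}(t)\in\calV^\star_j=\img(\mathbf{\Pi}_j)$ and $\stx^{\imp}(t)\in\calW^\star_j=\img(\fI-\mathbf{\Pi}_j)$, which decouples the mode into the ODE $\dot{\stx}^{\diff}=\fA^{\diff}_j\stx^{\diff}+\fB^{\diff}_j\inp$ on $(t_k,t_{k+1})$ and the nilpotent equation $\fE^{\imp}_j\dot{\stx}^{\imp}=\stx^{\imp}+\fB^{\imp}_j\inp$. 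In the space of piecewise-smooth distributions the smooth part of the unique solution of the latter is $\stx^{\imp}(t)=-\sum_{i=0}^{\indDAE_j-1}(\fE^{\imp}_j)^{i}\fB^{\imp}_j\inp^{(i)}(t)$. Using $\fC_j\stx=\fC^{\diff}_j\stx^{\diff}+\fC^{\imp}_j\stx^{\imp}$ — which holds since $\fC^{\diff}_j=\fC_j\mathbf{\Pi}_j$ acts as the identity on $\calV^\star_j$ and $\fC^{\imp}_j=\fC_j(\fI-\mathbf{\Pi}_j)$ as the identity on $\calW^\star_j$ — the output on $(t_k,t_{k+1})$ collapses to $\fC^{\diff}_j\stx^{\diff}+\fD_j\fU(t)$ with $\fD_j$ as in~\eqref{eqn:def:jum:imp}; setting $\fz\vcentcolon=\stx^{\diff}$ then reproduces the first and third lines of~\eqref{eqn:switchedODEjump}.

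\emph{The jump and the impulsive output.} At a switching instant $t_k$ the state $\stx(t_k^-)$ handed over from mode $\switch_{k-1}$ lies in $\calV^\star_{\switch_{k-1}}\oplus\calW^\star_{\switch_{k-1}}$ and is generically inconsistent for mode $\switch_k$ in the sense of~\eqref{eqn:consistencyCondition}. By the distributional solution theory of \cite{Tre09,Tre12} the regular part of $\stx$ on $(t_k,t_{k+1})$ restarts from $\mathbf{\Pi}_{\switch_k}\stx(t_k^-)$; decomposing $\stx(t_k^-)=\fz(t_k^-)+\stx^{\imp}_{\switch_{k-1}}(t_k^-)$ and identifying $\stx^{\imp}_{\switch_{k-1}}(t_k^-)$, via the formula of the previous step applied to mode $\switch_{k-1}$, with (up to the sign conventions) the quantity $\fU^{-}_{\switch_k}(t_k^-)$ of~\eqref{eqn:def:jum:imp} yields the jump law $\fz(t_k^+)=\mathbf{\Pi}_{\switch_k}\fz(t_k^-)+\mathbf{\Pi}_{\switch_k}\fU^{-}_{\switch_k}(t_k^-)$. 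The inconsistent part $(\fI-\mathbf{\Pi}_{\switch_k})\stx(t_k^-)$ is absorbed by the nilpotent equation into Dirac derivatives $\sum_{i\geq 1}(\fE^{\imp}_{\switch_k})^{i}(\cdot)\,\delta_{t_k}^{(i)}$, and since $(\fE^{\imp}_{\switch_k})^{i}(\fI-\mathbf{\Pi}_{\switch_k})=(\fE^{\imp}_{\switch_k})^{i}$ for $i\geq 1$, applying $\fC^{\imp}_{\switch_k}$ and separating the state-driven part $-\fC^{\imp}_{\switch_k}\sum_{i=1}^{\indDAE_k-1}(\fE^{\imp}_{\switch_k})^{i}\fz(t_k^-)\delta_{t_k}^{(i)}$ from the input-driven remainder — the latter expressible through $\fU^{-}_{\switch_{k+1}}(t_k^+)-\fU^{-}_{\switch_k}(t_k^-)$ and hence equal to $\fU^{\imp}_{\switch_k}(t_k)$ — reproduces the fourth line of~\eqref{eqn:switchedODEjump}. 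Summing the interior outputs over the intervals and the impulsive outputs over the switching instants gives the output distribution of~\eqref{eqn:sDAE}, which by construction equals that of~\eqref{eqn:switchedODEjump}.

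I expect the genuine difficulty to be this last step: the piecewise-smooth distributional calculus at the switching instants must be handled with care — tracking left versus right limits $t_k^{\pm}$, the derivatives $\delta_{t_k}^{(i)}$ of the Dirac impulse, and the fact that the value handed to mode $\switch_k$ already carries the input-induced impulsive state of mode $\switch_{k-1}$ — and one must verify that the impulse coefficients produced by the nilpotent subsystem coincide term by term with the somewhat intricate quantities $\fU^{-}_{\switch_k}$, $\fU^{\imp}_{\switch_k}$, and $\fD_{\switch_k}$ in~\eqref{eqn:def:jum:imp}, keeping the sign conventions consistent throughout. By contrast, the ODE parts, the feedthrough term, the agreement of the input-output maps on each interval, and the coordinate independence are comparatively routine. (This is \cite[Thm.~7.11]{Hos22}; the argument there proceeds along these lines.)
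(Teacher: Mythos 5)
The paper does not prove this statement: it is imported verbatim from \cite[Thm.~7.11]{Hos22}, so there is no in-paper argument to compare against. Your outline reproduces what is, by all appearances, the intended proof strategy of the cited reference: mode-wise decoupling via the \QWF, the explicit formula for the algebraic/impulsive part of the state, re-initialization through the consistency projector at each switch, and collection of the Dirac terms generated by the inconsistency of the handed-over state. The structural identities you invoke do check out; in particular $(\fE^{\imp}_{\switch_k})^{i}(\fI-\mathbf{\Pi}_{\switch_k})=(\fE^{\imp}_{\switch_k})^{i}$ for $i\ge 1$ follows directly from $\fE^{\imp}_{\switch_k}=\fT_{\switch_k}\diag(\zeroMat,\fN_{\switch_k})\fT_{\switch_k}^{-1}$ and $\fI-\mathbf{\Pi}_{\switch_k}=\fT_{\switch_k}\diag(\zeroMat,\fI)\fT_{\switch_k}^{-1}$, and the split of the output into interior, feedthrough, and impulsive contributions is the correct one.

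As a proof, however, the proposal is incomplete precisely where you flag the difficulty, and that step is the substance of the theorem. The identification of the impulse coefficients with the quantities $\fU^{-}_{\switch_k}$, $\fU^{\imp}_{\switch_k}$, and $\fD_{\switch_k}$ of~\eqref{eqn:def:jum:imp} is asserted ``up to sign conventions'' rather than computed: the smooth impulsive state is $\stx^{\imp}(t)=-\sum_{i}(\fE^{\imp})^{i}\fB^{\imp}\inp^{(i)}(t)$, with a minus sign, whereas $\fU^{-}_{\switch_k}(t_k^-)$ enters the jump law of~\eqref{eqn:switchedODEjump} with a plus sign, so the reconciliation is not automatic and has to be traced through the restriction and impulse-extraction rules for piecewise-smooth distributions in \cite{Tre09}. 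Likewise, the assertion that the regular part restarts from $\mathbf{\Pi}_{\switch_k}\stx(t_k^-)$ and that the inconsistency is converted into exactly the derivatives $\delta^{(i)}_{t_k}$, $1\le i\le\indDAE_k-1$, with coefficients $-(\fE^{\imp}_{\switch_k})^{i}\stx(t_k^-)$, is itself a nontrivial theorem on distributional solutions of \SDAEs (cf.~\cite{Tre09-thesis,Tre12}), not a consequence of the projector algebra alone. Since the result is in any case cited, the clean fix is to invoke those intermediate distributional results explicitly; as a self-contained derivation, your text has a genuine gap at exactly that verification.
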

	
Let us emphasize that in contrast to a standard switched linear system, the switched system~\eqref{eqn:switchedODEjump} has the following additional features:
\begin{enumerate}
	\item The state transition in~\eqref{eqn:switchedODEjump} from one mode to another may depend on derivatives of the input such that additional input-dependent jumps may occur in the solution. 
	\item As a result of the impulsive component of the original switched descriptor system, derivatives of the input may appear as a direct feedthrough term in the output of~\eqref{eqn:switchedODEjump}.
	\item Inconsistent states before a switch may result in additional Dirac impulses in the output of~\eqref{eqn:switchedODEjump}.
\end{enumerate}

\subsection{Reachability and Observability for \SLS}
\label{subsec:reachObserv}
Let $\boldsymbol{\phi}( t, t_0, \stx_0, \inp, \switch)$ denote the state trajectory at time $t$ of a generic \SLS starting from $\stx(t_0) = \stx_0$ with input~$\inp$ and switching path $\switch\in\calS$ with $\calS$ given in \eqref{eqn:suitableSwitchingSignals}. 

\begin{definition}(see \cite[Sec.~4.2.1]{SunG05})\label{def1}
	Let $\switch\in\calS$ be a given switching path. A state $\stx\in\R^{\stateDim}$ is called
	\begin{enumerate}
		\item \emph{reachable via $\switch$} if there exists a time instant $t_{\mathrm{f}} > t_0$ and an input $\inp\colon [t_0,t_f] \rightarrow \R^{\inpDim}$ such that $\boldsymbol{\phi}(t_f,t_0,0,\inp,\switch) = \stx$;
		\item \emph{unobservable via $\switch$} if there exists an input $\inp$ such that
		\begin{equation*}
			\fC_{\switch}\boldsymbol{\phi}(t,t_0,\stx,\inp,\switch) = \fC_\switch \boldsymbol{\phi}(t, t_0,\zeroVec,\inp,\switch)\quad \text{for all } t \ge t_0.
		\end{equation*}
	\end{enumerate}
	The \emph{reachable} and \emph{unobservable sets} via $\switch$, denoted by $\calR_\switch$ and $\calU\calO_\switch$, respectively, are the sets of states that are reachable and unobservable via $\switch$, respectively. We define the \emph{observable set} via $\switch$ of \eqref{eqn:sDAE}, denoted by $\calO_\switch$, as $\calO_\switch \vcentcolon= (\calU \calO_\switch)^{\perp}$ (note that this definition is not unique since any complement set, not just the orthogonal one, would be suitable). The set of reachable states $\calR$ and the set of observable states $\calO$ of \eqref{eqn:sDAE} can be defined as
	\begin{align}
		\label{eqn:reachableObservableSet}
		\calR \vcentcolon= \bigcup_{\switch\in\calS}\calR_{\switch} \qquad\text{and}\qquad
		\calO \vcentcolon= \bigcup_{\switch\in\calS}\calO_{\switch}.
	\end{align}
\end{definition}
In the case of linear time-invariant systems in control form (without switching, jumps, and impulses), i.e.,
\begin{align}
	\label{eqn:LTI:system}
	\left\{\quad\begin{aligned}
		\dot{\state}(t) &= \fA\state(t)+{\fB}{\inp}(t), &  \state(t_0^{-}) &= \zeroVec,\\
		\out(t) &= \fC \state(t),
	\end{aligned}\right.
\end{align}
the reachable and observable sets are rigorously characterized. One way to do so is via $\fA$-invariant subspaces. Indeed, it can be shown that the reachable and observable sets of \eqref{eqn:LTI:system} are given, respectively, by $ \langle \fA \mid\img(\fB)\rangle$ and the orthogonal complement of $\langle \ker(\fC)\mid\fA \rangle$. See, for instance, \cite[Lem.~2.3]{SunG05} for the reachable set and \cite[Thm.~2.3.1]{Dai89} for the observable set. 

Reachability and observability are crucial in balancing-based \MOR. Indeed, the fundamental idea of this type of projection-based \MOR is to remove the states that are difficult to reach and/or difficult to observe; see \cite[Cha.~$7$]{Ant05} for further details.

%---------------------------------------------
The next lemma catheterizes the reachable and observable set via $\switch$ of a \SLS with (only) state dependent jumps through the matrices $\mathbf{\Pi}_{\switch_{k}}$. 
\begin{lemma}[See Lem.~4.5 in \cite{Hos22}]\label{lemma1}
	For a given switching signal $\switch\in\calS$ with $K\in\N_0$ switches at time instants $t_k$ with $k=0,\ldots,K$, define $\tau_k \vcentcolon= t_{k+1}-t_k$, and consider the following recursive relation 
	\begin{align*}
		\calM_0 &\vcentcolon= \calR_{\switch_0}, & 
		\calM_k &\vcentcolon= \calR_{\switch_k} + \ee^{\fA_{\switch_k}^{\diff}\tau_k}\mathbf{\Pi}_{\switch_k}\calM_{k-1}, & k &=1,\ldots,K,\\
		\calN_\kappa &\vcentcolon= \calU\calO_{\switch_\kappa}, &
		\calN_k &\vcentcolon= \calU\calO_{\switch_k}\cap\left(\ee^{-\fA_{\switch_k}^{\diff}\tau_k}\mathbf{\Pi}^{-1}_{\switch_{k+1}}\left(\calN_{\switch_{k+1}}\right)\right), & k &= K-1,\ldots, 0,
	\end{align*}
	where $\calR_{\switch_k} \vcentcolon= \big\langle \fA^{\diff}_{\switch_k} \mid \img(\fB^{\diff}_{\switch_k})\big\rangle$, $\calU\calO_{\switch_{k}} \vcentcolon= \big\langle  \Kern (\fC^{\diff}_{\switch_{k}}) \mid \fA^{\diff}_{\switch_{k}} \big\rangle$, and $\mathbf{\Pi}^{-1}_{\switch_{k+1}}\left(\calN_{\switch_{k+1}}\right)$ is the preimage of $\mathbf{\Pi}_{\switch_{k+1}}$ on the set $\calN_{\switch_{k+1}}$.
	Then the reachable and observable set via $\switch$ satisfy
	\begin{equation*}
		\calR_{\switch} = \calM_\kappa,\qquad \text{and}\qquad 
		\calO_{\switch}=\left(  \calU\calO_{\switch}    \right)^{\perp}\;=\; \calN_0^{\perp};
	\end{equation*}
	respectively.
\end{lemma}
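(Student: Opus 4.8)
The plan is to prove the two assertions (for $\calR_\switch$ and for $\calO_\switch$) by induction on the number of switches, treating the evolution of the state between consecutive switching times as an affine map composed with the consistency projector at the switch. First I would recall that on an interval $(t_k,t_{k+1})$ the system \eqref{eqn:switchedODEjump:simplified} is a genuine linear \ODE $\dot{\fz}=\fA^{\diff}_{\switch_k}\fz+\fB^{\diff}_{\switch_k}\inp$ with mode index $\switch_k$, so by the classical theory of reachability for linear time-invariant systems the set of states reachable from the origin on such an interval (with $t_{k+1}-t_k>0$ and free input) is exactly the controllability subspace $\calR_{\switch_k}=\big\langle\fA^{\diff}_{\switch_k}\mid\img(\fB^{\diff}_{\switch_k})\big\rangle$; more generally, the set of states reachable at time $t_{k+1}^-$ starting from an initial value in a subspace $\calX$ at time $t_k^+$ is $\calR_{\switch_k}+\ee^{\fA^{\diff}_{\switch_k}\tau_k}\calX$, since $\ee^{\fA^{\diff}_{\switch_k}\tau_k}\calR_{\switch_k}\subseteq\calR_{\switch_k}$ ($\calR_{\switch_k}$ is $\fA^{\diff}_{\switch_k}$-invariant) and the zero-state response spans $\calR_{\switch_k}$.

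For the reachability claim I would set up the induction with base case $\calM_0=\calR_{\switch_0}$, which is the set of states reachable from $\fz(t_0^-)=\zeroVec$ over $(t_0,t_1)$. For the inductive step, a state $\stx$ is in $\calR_\switch$ restricted to "reachable by time $t_{k}^+$, immediately after the $k$th switch" iff it is the image under $\mathbf{\Pi}_{\switch_k}$ of some state reachable at $t_k^-$; the latter, by the inductive hypothesis and the interval-reachability fact above, is $\calR_{\switch_k}+\ee^{\fA^{\diff}_{\switch_{k-1}}\tau_{k-1}}\mathbf{\Pi}_{\switch_{k-1}}\calM_{k-2}$-type expression — one has to be careful with the indexing so that $\calM_k=\calR_{\switch_k}+\ee^{\fA^{\diff}_{\switch_k}\tau_k}\mathbf{\Pi}_{\switch_k}\calM_{k-1}$ accounts for: evolving $\calM_{k-1}$ (reachable just after switch $k-1$) forward over $(t_{k-1},t_k)$, applying $\mathbf{\Pi}_{\switch_k}$ at $t_k$, and then adding the fresh reachable directions $\calR_{\switch_k}$ on $(t_k,t_{k+1})$. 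Since the input is arbitrary and the dwell times $\tau_k$ are fixed but positive, all these affine contributions add up as subspace sums, and the union over admissible $\switch$ is already captured by fixing the mode sequence and dwell times of the given $\switch$. Thus $\calR_\switch=\calM_\kappa$.

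The observability claim is the dual statement and I would obtain it by the standard duality between reachability and observability, or directly: $\stx$ is unobservable via $\switch$ iff $\fC^{\diff}_{\switch_k}\fz(t)=0$ for all $t$ along the zero-input trajectory started from $\stx$, which forces $\stx$ into $\calU\calO_{\switch_0}=\big\langle\Kern(\fC^{\diff}_{\switch_0})\mid\fA^{\diff}_{\switch_0}\big\rangle$ on $(t_0,t_1)$, then the propagated constraint $\fz(t_1^+)=\mathbf{\Pi}_{\switch_1}\ee^{\fA^{\diff}_{\switch_0}\tau_0}\stx$ must lie in $\calU\calO_{\switch_1}$, and so on; solving these constraints backward (pulling $\calU\calO_{\switch_{k+1}}$ back through $\mathbf{\Pi}_{\switch_{k+1}}^{-1}$ and through $\ee^{-\fA^{\diff}_{\switch_k}\tau_k}$ using the notation \eqref{eqn:pre:img}) yields exactly the recursion for $\calN_k$, and $\calU\calO_\switch=\calN_0$, hence $\calO_\switch=\calN_0^{\perp}$. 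I expect the main obstacle to be bookkeeping rather than conceptual: one must handle the preimage $\mathbf{\Pi}^{-1}_{\switch_{k+1}}$ carefully (the consistency projector is not invertible, so $\mathbf{\Pi}^{-1}_{\switch_{k+1}}\calN$ must be read as the full preimage, and one should check that intersecting with $\Kern(\fC^{\diff}_{\switch_k})$-style spaces is compatible with taking $\langle\,\cdot\mid\fA^{\diff}\rangle$ closures), and verify that taking the union over all $\switch\in\calS$ does not enlarge the set beyond what fixing the given signal's mode/dwell-time data produces — which holds because $\calM_\kappa$, $\calN_0$ depend only on the finite data $(\switch_0,\ldots,\switch_\kappa,\tau_0,\ldots,\tau_{\kappa-1})$ of the one signal under consideration. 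This is essentially the content of the cited references \cite{KueT16,Hos22}, so I would attribute the detailed verification there and only sketch the induction here.
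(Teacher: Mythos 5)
The manuscript itself contains no proof of this lemma --- it is imported from the cited references \cite{KueT16,Hos22} and stated without argument --- so there is no in-paper proof to compare against. Your sketch is the standard argument that those references carry out: on each interval the system is a genuine LTI system, whose reachable set from a subspace $\calX$ after any positive time $\tau$ is $\calR + \ee^{\fA\tau}\calX$ with $\calR=\langle \fA\mid\img(\fB)\rangle$, and whose unobservable set is $\langle\Kern(\fC)\mid\fA\rangle$; these are composed with the consistency projectors at the switching instants by a forward induction for $\calR_\switch$ and a backward induction (via full preimages of the non-invertible $\mathbf{\Pi}_{\switch_{k+1}}$) for $\calU\calO_\switch$. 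That is correct in substance.

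One concrete slip to fix: your verbal reading of the reachability recursion does not match the formula you (correctly) write. In $\calM_k = \calR_{\switch_k} + \ee^{\fA^{\diff}_{\switch_k}\tau_k}\mathbf{\Pi}_{\switch_k}\calM_{k-1}$ the flow map carries mode $\switch_k$ and duration $\tau_k$ and acts \emph{after} the projector, so the consistent semantics is: $\calM_{k-1}$ is the reachable set at $t_k^-$; the jump $\fz(t_k^+)=\mathbf{\Pi}_{\switch_k}\fz(t_k^-)$ maps it to $\mathbf{\Pi}_{\switch_k}\calM_{k-1}$; flowing in mode $\switch_k$ over $(t_k,t_{k+1})$ and adding the zero-state reachable directions $\calR_{\switch_k}$ yields the reachable set at $t_{k+1}^-$, which is $\calM_k$. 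You instead describe $\calM_{k-1}$ as the set ``just after switch $k-1$'', evolve it over $(t_{k-1},t_k)$ and only then project; that composition would read $\mathbf{\Pi}_{\switch_k}\ee^{\fA^{\diff}_{\switch_{k-1}}\tau_{k-1}}\calM_{k-1}$ and is not the stated recursion. This is exactly the indexing pitfall you flag yourself, so it is an expository inconsistency rather than a failure of the method, but in a written-out proof the induction hypothesis must be pinned to ``reachable set at $t_k^-$'' for the formula to come out as stated. The observability half of your sketch, including the treatment of $\mathbf{\Pi}^{-1}_{\switch_{k+1}}$ as a preimage in the sense of \eqref{eqn:pre:img}, is consistent with the recursion.
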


\subsection{The piecewise balancing reduction for \SLS}\label{subsec:PBR-SLS}
Consider a \SLS of the form 
\begin{equation}
	\label{eqn:SLS}
	\Sigma_{\SLS} \quad \left\{\quad \begin{aligned}
		 \dot{\stx}(t)  &= \fA_{\switch(t)} \stx(t) +\fB_{\switch(t)}\inp(t), & \stx(t_0) &= \zeroVec, \\
		\out(t) &= \fC_{\switch(t)}\stx(t),\\
	\end{aligned}\right.
\end{equation}
with $\fA_j$ asymptotically stable for all $j=1,\ldots,M$. We start by defining the following $M$ pairs of \GLEs
\begin{subequations}
	\label{eqn:GLE:b}
	\begin{align}
		\fA_i \fcP_i+\fcP_i\fA_i^\T+\sum_{j=1}^{M}\left(\fN_{i,j} \fcP_i\fN_{i,j}^\T+\fB_j\fB_j^\T \right) &= \zeroVec,\label{eqn:GLE:reach:b}\\
		\fA_i^\T \fcQ_i + \fcQ_i\fA_i+\sum_{j=1}^{M}\left(\fN^\T_{i,j} \fcQ_i\fN_{i,j} + \fC_j^\T\fC_j\right) &= \zeroVec;\label{eqn:GLE:observ:b}
	\end{align}
\end{subequations}
	where $\fN_{i,j}\vcentcolon=\fA_j-\fA_i$ for $i=1,\ldots,M$. The symmetric positive semidefinite solutions  $\fcP_i$ and $\fcQ_i$ are known as the Gramians for the \SLS, and as shown in \cite[Thm.~3]{PonGB20}, they satisfy the relations 
	\begin{equation*}
		\calR =\text{range}(\fcP_i)\quad\text{and}\quad\calO=\text{range}(\fcQ_i)\quad\text{for }i=1,\ldots,M;
	\end{equation*}
	with $\calR$ and $\calO$, the reachable and observable set of the \SLS; see \Cref{def1}. Now, let  $\fcP_i =\fS_i\fS_i^\T$ and $\fcQ_i=\fR_i\fR_i^\T$ denote the Cholesky factors of the $i$th Gramian pair; compute the \emph{singular value decomposition} (\SVD) of the product of the Cholesky factors
	\begin{equation*}
		\fS_i^\T\fR_i = \left[\fU_{i,1}, \fU_{i,2}\right]\begin{bmatrix}
			\mathbf{\Sigma}_{i,1}&\zeroVec\\
			\zeroVec&\mathbf{\Sigma}_{i,2}
		\end{bmatrix}[\fV_{i,1},\fV_{i,2}]^\T,
	\end{equation*}
	and compute the projection matrices~$\fV_i$ and $\fW_i$ via
	\begin{equation*}
		\fV_i = \fS_i\fU_{i,1}\mathbf{\Sigma}^{-1/2}_{i,1} \quad\text{and}\quad
		\fW_i = \fR_i\fV_{i,1}\mathbf{\Sigma}^{-1/2}_{i,1}.
	\end{equation*}
	Then, we introduce the following reduced systems with state transition matrices
	\begin{equation}\label{eqn:new:red:system}
		 \Sigma_{\PBR-\SLS} \quad \left\{\quad \begin{aligned}
			\dot{\stateRed}(t)  &= \reduceBis{\fA}_{\switch(t)} \stateRed(t) + \reduceBis{\fB}_{\switch(t)}\inp(t), &	\stateRed(t_0) &= \zeroVec, \\
			\stateRed(t^{+}_{k})&=\fW^{\T}_{\switch_k}\fV_{\switch_{k-1}}\stateRed(t^-_k),\\
			\reduce{\out}(t) &= \reduceBis{\fC}_{\switch(t)}\stateRed(t),\\
		\end{aligned}\right.
	\end{equation}
	where $t_k$ are the switching times for the particular $\switch\in\calS$, $\switch_k\vcentcolon=\switch(t_k)$, and 
	\begin{align*}
		\reduceBis{\fA}_j &\vcentcolon= \fW_j^\T \fA_j \fV_j \in \R^{\stateDimRed_j\times \stateDimRed_j}, &
		\reduceBis{\fB}_j &\vcentcolon= \fW_j^\T \fB_j\in \R^{\stateDimRed_j\times \inpDim}, &
		\reduceBis{\fC}_j &\vcentcolon= \fC_j \fV_j\in\R^{\outDim\times \stateDimRed_j}
	\end{align*}
	for $j\in\switchingSet$. The procedure used to derive \eqref{eqn:new:red:system} from \eqref{eqn:SLS} is referred to as the \emph{piecewise balancing reduction} (\PBR) for \SLS. For the stability properties of the \PBR for \SLS, we refer to the discussion in \cite[Sec.~4.2]{ManU26}. Concerning accuracy, we now recall the results on the a-posteriori error estimate.
	
	\subsubsection{Accuracy of the \PBR for \SLS}\label{subsubsec:accuracy:PBR:SLS}
	For a switching signal $\switch\in\calS$, let us introduce the set of switching times up to time $t$, which we denote as $\calT_{\switch(t)}\vcentcolon=\{t_1,\ldots,t_K\}$, with $K\in\N_0$. Now, let us suppose that each subsystem $j$ of \eqref{eqn:SLS} is balanced via $ \fcP_i$ and $ \fcQ_i$; thus, the generalized observability and controllability Gramians are of the form
	\begin{equation}\label{eqn:bal:gra}
		\fSigma_j=\begin{bmatrix}
			\fSigma_{j,1}&\zeroVec\\
			\zeroVec&\sigma(j)\fI_{m(j)}
		\end{bmatrix} \quad\text{with}\quad\fSigma_{j,1}\in\R^{(\stateDim-m(j))\times(\stateDim-m(j))}
	\end{equation}
	where $m(j)$ is the multiplicity of $\sigma(j)$ implying that $\sigma(j)\not\in\fSigma_{j,1}$, being $\fSigma_{j,1}$ diagonal as a consequence of balancing. For simplicity, let us assume $\tilde m=m(j)$ for all $j\in\switchingSet$ (similar results can be stated without considering the minimum value of the multiplicity among the $M$ system modes). Rewriting the system matrices $\fA_j$, $\fB_j$, and $\fC_j$ for each $j\in\switchingSet$ in their balanced form we get
	\begin{equation}\label{eqn:SLS:balanced}
		\bar{\fA}_{j}\vcentcolon=\begin{bmatrix}
			\reduceBis{\fA}_{j}& \fA_{j,12}\\
			\fA_{j,21}& \fA_{j,22}
		\end{bmatrix},\quad  \bar{\fB}_{j}\vcentcolon=\begin{bmatrix}
			\reduceBis{\fB}_{j}\\
			\fB_{j,2}
		\end{bmatrix},\quad  \bar{\fC}_{j}\vcentcolon=\begin{bmatrix}
			\reduceBis{\fC}_{j}&
			\fC_{j,2}
		\end{bmatrix},
	\end{equation}
	where $\reduceBis{\fA}_{j}, \reduceBis{\fB}_{j}$, and $\reduceBis{\fC}_{j}$ is the $j$ subsystem of  \eqref{eqn:new:red:system} with $\stateDimRed=\stateDim-\tilde{m}$. Now, 	consider the \ROM~\eqref{eqn:new:red:system} for $\stateDimRed=\stateDim-\tilde m$ and the \FOM in balanced form \eqref{eqn:SLS:balanced}. Take the decomposition $\stateBal(t) = [\stateBal_1(t)^\T, \stateBal_2(t)^\T]^\T$ where $\stateBal_1(t) \in \R^{\stateDimRed}$ and let us define the vectors
	\begin{equation}\label{eqn:new:var}
		\bfX_c(t)\vcentcolon=\begin{bmatrix}
			\stateBal_1(t)+\stateRed(t)\\
			\stateBal_2(t)
		\end{bmatrix}\quad\text{and}\quad\bfX_o(t)\vcentcolon=\begin{bmatrix}
			\stateBal_1(t)-\stateRed(t)\\
			\stateBal_2(t)
		\end{bmatrix},
	\end{equation}
	and the quantities
	\begin{align}\label{eqn:swi:con:fac}
		\begin{aligned}
			\gleG_{o,\switch}(\stateDim,\stateDimRed)\;\vcentcolon=&\;{\bfX}_o^{\T}(t)\fSigma_{\switch(t)}{\bfX}_o(t) +\sum_{k=1}^{K}	\left({\bfX}_o^{\T}(t^{-}_k)\fSigma_{\switch(t^{-}_k)}{\bfX}_o(t^{-}_k)-{\bfX}_o^{\T}(t^{+}_k)\fSigma_{\switch(t^{+}_k)}{\bfX}_o(t^{+}_k)    \right),\\
			\gleG_{c,\switch}(\stateDim,\stateDimRed)\;\vcentcolon=&\;	\sigma(\switch(t_K^{+}))^2{\bfX}_c^{\T}(t)\fSigma^{-1}_{\switch(t)}{\bfX}_c(t) \\
			+&\;\sum_{k=1}^{K}	\left(\sigma(\switch(t^{-}_k))^2{\bfX}_c^{\T}(t^{-}_k)\fSigma^{-1}_{\switch(t^{-}_k)}{\bfX}_c(t^{-}_k)-\sigma(\switch(t_k^{+}))^2{\bfX}_c^{\T}(t^{+}_k)\fSigma^{-1}_{\switch(t^{+}_k)}{\bfX}_c(t^{+}_k)    \right),\\
			\fM_c(j)\;\vcentcolon=&\;\bar \fA_j\Sigma_j+\Sigma_j\bar \fA_j^{\T}+\bar \fB_j\bar\fB^{\T}_j,\quad	\fM_o(j)\;\vcentcolon=\;\bar \fA^{\T}_j\Sigma_j+\Sigma_j\bar \fA_j+\bar \fC_j^{\T}\bar\fC_j,\\
			\gleH_{c,\switch}(\stateDim,\stateDimRed)\;\vcentcolon=&\;\sum_{k=0}^{K}\lambda_{1}(	\fM_c(\switch(t_k^{+})))\int_{t_k}^{t_{k+1}}\|\bfX_c(s)\|_2^2\,\ds,\\
			\gleH_{o,\switch}(\stateDim,\stateDimRed)\;\vcentcolon=&\;\sum_{k=0}^{K}\lambda_{1}(	\fM_c(\switch(t_k^{+})))\int_{t_k}^{t_{k+1}}\|\bfX_o(s)\|_2^2\,\ds.
		\end{aligned}
	\end{align}

With this notation in place, we are nearly prepared to formulate the result on the error bound, namely \cite[Thm.~4.3]{ManU26}. For $\stateDimRed\ll\stateDim$ and $k\in\N$ with $k>\stateDimRed$, we denote by $\sigma_k(j)$ the distinct singular values of the balanced Gramian \eqref{eqn:bal:gra} and by $m_k(j)$ their corresponding multiplicities for $k= \stateDimRed+1, \ldots ,\tilde \stateDim$, respectively. To streamline the notation, we assume that $m_k(j_1)= m_k(j_2)$ holds for all $j_1,j_2\in\switchingSet$ and for all $k= \stateDimRed+1, \ldots ,\tilde \stateDim$. Under this assumption, we can omit the dependence on $j$ in $m_k(j)$ and thus avoid introducing additional cumbersome notation in the statement of the next theorem.
	\begin{theorem}[Error bound of the \PBR for \SLS; see \cite{ManU26}]\label{teo6}
		Given a switching signal $\switch\in\calS$, consider the \ROM~\eqref{eqn:new:red:system} of size $\stateDimRed$. Then, the output error between the \FOM~\eqref{eqn:sDAE} and the \ROM~\eqref{eqn:new:red:system} of size $\stateDimRed$ satisfies the relation 
		\begin{align}
			\label{eq:err:est:3}
			\begin{aligned}
				&\left(\int_{0}^{t}\|\out(s)-\tilde{\out}(s)\|^2_{2}\,\ds\right)^{\frac{1}{2}}
				\;\leq\;2\left(\int_{0}^{t}\|\inp(s)\|^2_{2}\,\ds\right)^{\frac{1}{2}}\sum_{k=\stateDimRed+1}^{\tilde \stateDim}\tilde{\sigma}_{s_{k}}\\
				+&
				\sum_{k=\stateDimRed+1}^{\tilde \stateDim}   \gleH_{c,o,\switch}(s_{k+1},s_{k})^{\frac{1}{2}}H(   \gleH_{c,o,\switch}(s_{k+1},s_{k}))\\
				-&
				\sum_{k=\stateDimRed+1}^{\tilde \stateDim}   \gleG_{c,o,\switch}(s_{k+1},s_{k})^{\frac{1}{2}}H(-   \gleG_{c,o,\switch}(s_{k+1},s_{k}))\;=\vcentcolon\;\tau(\stateDimRed,\inp),
			\end{aligned}
		\end{align}
		where $H(x)$ is the Heaviside function, $\tilde{\sigma}_k\vcentcolon=\max_{j\in\switchingSet}\sigma_{k}(j) $, $s_k\vcentcolon=\sum_{k=\stateDimRed}^{\tilde n}m_k$, $m_\stateDimRed\vcentcolon=\stateDimRed$, and
		\begin{align*}
			\begin{aligned}
				\gleH_{c,o,\switch}(s_{k+1},s_{k})	\;&\vcentcolon=\;\gleH_{o,\switch}(s_{k+1},s_{k})+\gleH_{c,\switch}(s_{k+1},s_{k}),\\
				\gleG_{c,o,\switch}(s_{k+1},s_{k})	\;&\vcentcolon=\;\gleG_{o,\switch}(s_{k+1},s_{k})+\gleG_{c,\switch}(s_{k+1},s_{k}).
			\end{aligned}
		\end{align*}
	\end{theorem}

	The error bound presented in \Cref{teo6} comprises three distinct contributions: the first corresponds to the sum of the truncated singular values, as in the classical a-priori error bound for balanced truncation; the second stems from the use of piecewise constant Gramians in the derivation of \eqref{eqn:new:red:system}; and the third reflects the fact that the numerically computed solutions of the \GLEs may not exactly satisfy certain \emph{linear matrix inequalities} (\LMIs) conditions due to approximation errors. We refer to \cite[Sec.~4.5]{ManU26} for a discussion on the computational aspects of \Cref{teo6}.
	
%-----------------------------------------------------------------------------%
\section{\MOR for \SDAE}
\label{sec:MORforDAE}
In this section, we demonstrate how the \PBR for \SLS introduced in \Cref{subsec:PBR-SLS} can likewise be applied to the \SDAE setting to obtain an accurate \ROM. In particular, in \Cref{Sec:NEW-FORMULATION}, we show that the input-output behavior of \eqref{eqn:sDAE} coincides with that of a \SLS whose state dimension varies over time and exhibits state-input-dependent jumps and impulsive outputs at the switching instants. In \Cref{eqn:ext:reach} and \Cref{eqn:ext:obs}, we examine the reachability and observability sets of this special class of \SLS and connect them to those of a \SLS with augmented input and output matrices. Finally, in \Cref{sec:PBR-sDAE}, we present the \PBR tailored to \SDAE.

\subsection{Reformulation of the switched system}\label{Sec:NEW-FORMULATION}
Let us observe that, in general, $\fA_{\switch_k}^{\diff}$ is singular because, in the switched system \eqref{eqn:switchedODEjump}, the state variable is represented in the full dimension $\stateDim$, while the dynamics are described solely through the differential part of each \DAE~subsystem, which, in general, evolves in a smaller dimension space. In what follows, we derive an alternative formulation of \eqref{eqn:switchedODEjump} that guarantees asymptotically stable matrices for each subsystem by employing an appropriate number of degrees of freedom for the state variable in every system mode. By employing the definitions \eqref{eqn:QWF}, \eqref{eqn:projSel}, and \eqref{eqn:diffImpMatrices} we can rewrite system \eqref{eqn:switchedODEjump} as
\begin{align}\label{eq:58}
	\left\{\quad\begin{aligned}
		\dot{\SwDifState}_{\switch_k}(t) &= \begin{bmatrix}
			\fJ_{\switch_k} & \zeroMat\\
			\zeroMat & \zeroMat
		\end{bmatrix}\SwDifState_{\switch_k}(t) + \begin{bmatrix}
			\fI_{\stateDim_{\fJ_{{\switch_k}}}}&\mathbf{0}\\
			\zeroMat & \zeroMat
		\end{bmatrix}\fS_{\switch_k}\fB_{\switch_k}\inp(t),	& t\in(t_k,t_{k+1}),\\
		\SwDifState_{\switch_k}(t_k^+) &=\begin{bmatrix}
			\fI_{\stateDim_{\fJ_{\switch_k}}} & \zeroMat\\
			\zeroMat & \zeroMat
		\end{bmatrix}	\StateTran_{\switch_{k},\switch_{k-1}}\SwDifState_{{\switch_{k-1}}}(t_k^{-})+	\fcB_{\switch_k,\switch_{k-1}}\fU_{\switch_{k-1}}(t^{-}_k), & \SwDifState(t_0^{-}) = \zeroVec,\\
		\out(t) &= \fC_{\switch_k} \fT_{\switch_k}   \begin{bmatrix}
			\fI_{\stateDim_{\fJ_{{\switch_k}}}} & \zeroMat\\
			\zeroMat & \zeroMat
		\end{bmatrix}       \SwDifState_{\switch_k}(t) + \fD_{\switch_k}\fU_{\switch_k}(t), & t\in(t_k,t_{k+1}),\\
		\out[t_k] &=-	\fcC_{\switch_{k},\switch_{k-1}}\begin{bmatrix}
			\mathbf{1}^{\T}_{\stateDim}\delta_{t_k}&\ldots&	\mathbf{1}^{\T}_{\stateDim}\delta_{t_k}^{\nu_{k}-1}
		\end{bmatrix}^{\T}{\SwDifState}_{\switch_{k-1}}(t^{-}_k)+ \fU_{\switch_k}^{\imp}(t_k).
	\end{aligned}\right.
\end{align}
with $\fz(t)=\fT_{\switch_k}\SwDifState_{\switch_k}(t)$, the notation convention $\nu_k=\nu_{\switch_k}$, and
\begin{align*}
	\fcB_{\switch_k,\switch_{k-1}}\vcentcolon=&\begin{bmatrix}
		\fI_{\stateDim_{\fJ_{\switch_{k}}}} & \zeroMat\\
		\zeroMat&\zeroMat
	\end{bmatrix} \fT_{\switch_{k}}^{-1}
	\begin{bmatrix}
		\begin{bmatrix}
			\mathbf{0}&\mathbf{0}\\
				\mathbf{0}&\fN_{\switch_{k-1}}^0
		\end{bmatrix}\fS_{\switch_{k-1}}\fB_{\switch_{k-1}}, \ldots, 	\begin{bmatrix}
		\mathbf{0}&\mathbf{0}\\
		\mathbf{0}&\fN_{\switch_{k-1}}^{\nu_{k-1}-1}
		\end{bmatrix}\fS_{\switch_{k-1}}\fB_{\switch_{k-1}}
	\end{bmatrix},\\
		\fcC_{\switch_{k},\switch_{k-1}} \vcentcolon=& \fC_{\switch_k}\fT_{\switch_{k}}\begin{bmatrix}
	  \begin{bmatrix}
		\mathbf{0}& \mathbf{0}\\
			\mathbf{0}&\fN_{\switch_k}
		\end{bmatrix}\StateTran_{\switch_{k},\switch_{k-1}},\ldots, \begin{bmatrix}
	    \mathbf{0}& \mathbf{0}\\
		\mathbf{0}&\fN_{\switch_k}^{\nu_{k}-1} 
		\end{bmatrix}\StateTran_{\switch_{k},\switch_{k-1}}
		\end{bmatrix},\\
	\StateTran_{\switch_{k},\switch_{k-1}}\vcentcolon=& \fT_{{\switch_k}}^{-1}\fT_{{\switch_{k-1}}}.
\end{align*}
Since \eqref{eq:58} is obtained via a state-space transformation, the input-output mapping is not altered. We immediately observe that the last $\stateDim-\stateDim_{\fJ_{\switch_k}}$ entries of $\SwDifState_{\switch_k}(t)$ are always zero; therefore, by defining the new variable $\SwDifStateRed_{\switch_k}(t)=\begin{bmatrix}
			\fI_{\stateDim_{\fJ_{{\switch_k}}}} & \zeroMat
		\end{bmatrix}\SwDifState_{\switch_k}(t)$, system \eqref{eq:58} can be reduced to 
\begin{align}\label{eq:59}
	\left\{\quad\begin{aligned}
		\dot{\SwDifStateRed}_{\switch_k}(t) &= \fJ_{\switch_k}\SwDifStateRed_{\switch_k}(t) + \begin{bmatrix}
			\fI_{\stateDim_{\fJ_{\switch_k}}} & \zeroMat
		\end{bmatrix}\fS_{\switch_k}\fB_{\switch_k}\inp(t), & t\in(t_k,t_{k+1}),\\
		\SwDifStateRed_{\switch_k}(t_k^+) &=\tilde	\StateTran_{{\switch_k},{\switch_{k-1}}} \SwDifStateRed_{\switch_{k-1}}(t_k^{-})+\tilde	\fcB_{\switch_k,\switch_{k-1}}\fU_{\switch_{k-1}}(t^{-}_k), & \SwDifStateRed(t_0^{-}) = \zeroVec,\\
		\out(t) &= \fC_{\switch_k} \fT_{\switch_k}   \begin{bmatrix}
			\fI_{\stateDim_{\fJ_{{\switch_k}}}} & \zeroMat
		\end{bmatrix}^\T  \SwDifStateRed_{\switch_k}(t) + \fD_{\switch_k}\fU_{\switch_k}(t), & t\in(t_k,t_{k+1}),\\
		\out[t_k] &=-	\tilde\fcC_{\switch_{k},\switch_{k-1}}\begin{bmatrix}
			\mathbf{1}^{\T}_{{\stateDim_{\fJ_{{\switch_{k-1}}}}}}\delta_{t_k}&\ldots&	\mathbf{1}^{\T}_{{\stateDim_{\fJ_{{\switch_{k-1}}}}}}\delta_{t_k}^{\nu_{k}-1}
		\end{bmatrix}^{\T}{\SwDifStateRed}_{\switch_{k-1}}(t^{-}_k)+ \fU_{\switch_k}^{\imp}(t_k)
	\end{aligned}\right.
\end{align}
with
\begin{align}\label{eqn:reformulation:SDAE}
\begin{aligned}
\tilde	\StateTran_{{\switch_k},{\switch_{k-1}}} \vcentcolon=& \begin{bmatrix}
		\fI_{\stateDim_{\fJ_{{\switch_k}}}}&\mathbf{0}
	\end{bmatrix}\fT_{{\switch_k}}^{-1}\fT_{{\switch_{k-1}}}\begin{bmatrix}
		\fI_{\stateDim_{\fJ_{{\switch_{k-1}}}}}&\mathbf{0}
	\end{bmatrix}^\T\;\in\;\R^{\stateDim_{\fJ_{{\switch_k}}}\times\stateDim_{\fJ_{\switch_{k-1}}}},\\
	\tilde	\fcB_{\switch_k,\switch_{k-1}}\vcentcolon=&\begin{bmatrix}
		\fI_{\stateDim_{\fJ_{\switch_{k}}}} & \zeroMat
	\end{bmatrix} \fT_{\switch_{k}}^{-1}
	\begin{bmatrix}
		\begin{bmatrix}
			\mathbf{0}&\mathbf{0}\\
			\mathbf{0}&\fN_{\switch_{k-1}}^0
		\end{bmatrix}\fS_{\switch_{k-1}}\fB_{\switch_{k-1}}, \ldots, 	\begin{bmatrix}
			\mathbf{0}&\mathbf{0}\\
			\mathbf{0}&\fN_{\switch_{k-1}}^{\nu_{k-1}-1}
		\end{bmatrix}\fS_{\switch_{k-1}}\fB_{\switch_{k-1}}
	\end{bmatrix},\\
		\tilde\fcC_{\switch_{k},\switch_{k-1}} \vcentcolon=& \fC_{\switch_k}\fT_{\switch_{k}}\begin{bmatrix}
		\begin{bmatrix}
			\mathbf{0}& \mathbf{0}\\
			\mathbf{0}&\fN_{\switch_k}
		\end{bmatrix}\StateTran_{\switch_{k},\switch_{k-1}}\begin{bmatrix}
		\fI_{\stateDim_{\fJ_{{\switch_{k-1}}}}}\\
        \mathbf{0}
		\end{bmatrix},\ldots, \begin{bmatrix}
			\mathbf{0}& \mathbf{0}\\
			\mathbf{0}&\fN_{\switch_k}^{\nu_{k}-1} 
		\end{bmatrix}\StateTran_{\switch_{k},\switch_{k-1}}\begin{bmatrix}
		\fI_{\stateDim_{\fJ_{{\switch_{k-1}}}}}\\ \mathbf{0}
		\end{bmatrix}
	\end{bmatrix}.
    \end{aligned}
\end{align}
Note that $\fJ_j$ is an asymptotically stable matrix for all $j=1,\ldots,M$ (see \Cref{rmk:AS}). In the next subsections, we characterize the reachability and observability sets of \eqref{eq:59} by relating them to those of a suitably defined \SLS with state jumps.

\subsection{Reachability for switched systems with input-state-dependent jumps}\label{eqn:ext:reach}
Consider a fixed switching signal $\switch\in\switchingSet$. To characterize the reachable set via $\switch$ of system~\eqref{eq:59}, we can restrict the analysis to the following input-to-state switched linear system with input-state-dependent jumps
\begin{align}\label{eq21}
	\left\{\quad\begin{aligned}
		\dot{\SwDifStateRed}_{\switch_k}(t) &= \fJ_{\switch_k}\SwDifStateRed_{\switch_k}(t) +\tilde\fB_{\switch_k}\inp(t), & t\in(t_k,t_{k+1}),\\
	\SwDifStateRed_{\switch_k}(t_k^+) &=\tilde	\StateTran_{{\switch_k},{\switch_{k-1}}} \SwDifStateRed_{\switch_{k-1}}(t_k^{-})+\tilde	\fcB_{\switch_k,\switch_{k-1}}\fU_{\switch_{k-1}}(t^{-}_k), & \SwDifStateRed(t_0^{-}) = \zeroVec, 
	\end{aligned}\right.
\end{align}
where $\tilde\fB_{\switch_k}\vcentcolon=\begin{bmatrix}
	\fI_{\stateDim_{\fJ_{\switch_k}}} & \zeroMat
\end{bmatrix}\fS_{\switch_k}\fB_{\switch_k}$. The solution of~\eqref{eq21} for $t \in [t_k,t_{k+1})$ and $k\in\{0,\ldots,K-1\}$ is given recursively by
\begin{align}\label{eq25}
	\begin{aligned}
\SwDifStateRed_{\switch_k}(t) \;=&\; \ee^{\fJ_{\switch_k}(t-t_k)}\left(\tilde	\StateTran_{{\switch_k},{\switch_{k-1}}} \SwDifStateRed_{\switch_{k-1}}(t_k^{-})+\tilde	\fcB_{\switch_k,\switch_{k-1}}\fU_{\switch_{k-1}}(t^{-}_k),  \right)\\
+&\;\int_{t_k}^{t}\ee^{\fJ_{\switch_k}(t-s)}\tilde\fB_{\switch_k}\inp(s)\,\ds.
	\end{aligned}
\end{align}
Let us now introduce the following definition of reachable set via $\switch$ for a given time interval.
\begin{definition}\label{def:rech:ex:reach}
	For a given switching signal $\switch\in\calS$, the reachable and modified reachable subspace of the switched system \eqref{eq21} on the time interval $[t_0,t)$ are defined, respectively, by
	\begin{align*}
		\calR_{\switch}(t_0,t) &\vcentcolon= \{\SwDifStateRed_{\switch(t^-)} (t^-) \mid \exists \text{ a solution } (\SwDifStateRed_\switch,\inp) \text{ of \eqref{eq21} in $[t_0,t)$ with } \fz(t_0^-)=0\},\\
		\tilde \calR_{\switch}(t_0,t) &\vcentcolon= \left\{\SwDifStateRedBis_{\switch(t^-)}(t^-) \,\left|\,\begin{aligned}
			& \SwDifStateRedBis_{\switch(t^-)}(t^-) =\tilde	\StateTran_{\switch(t^+),\switch(t^-)}\SwDifStateRed_{\switch(t^-)} (t^-)+\tilde	\fcB_{\switch(t^+),\switch(t^-)}\fU_{\switch(t^-)}(t^{-})\\
			&\text{ for }\SwDifStateRed_{\switch(t^-)} (t^-)\in	\calR_{\switch}(t_0,t) \text{ and }\fU_{\switch(t^-)}(t^-)\text{ as in \eqref{eqn:def:jum:imp:c}}\\ 
			& \text{ with } \inp \text{ such that }(\SwDifStateRed_{\switch} (t) ,\inp(t))\;\text{for }t\in[t_0,t),\\
			& \text{ is a solution of }\eqref{eq21} \text{ for } \fz(t_0^-)=0
		\end{aligned}\right.\right\}.
	\end{align*}
\end{definition}

	By definition, we have $\calR_{\switch}(t_0,t)\subseteq \calR_\switch$ and $\bigcup_{t>t_0}\calR_{\switch}(t_0,t)=\calR_\switch$, with $\calR_{\switch}$ as in \Cref{def1}. 
	
Let us also formally introduce the local reachable subspace for the mode $k$, i.e., $\calR_{k}$, and its modified version, $\tilde \calR_{k}$, which corresponds to the one in \Cref{def:rech:ex:reach} when the switching signal is constantly equal to $k$ in the prescribed time frame, i.e., $\switch(t)=k$ in $[t_0,t)$. In the following, we will make use of \cite[Lem.~2.3]{SunG05} which states equivalence of $\calR_{k}$ with the smallest invariant subspace containing $\img(\tilde \fB_{k})$, i.e., we have $\calR_{k} \equiv \langle \fJ_{\switch_k} \mid \img(\tilde \fB_{k})\rangle$.

With these preparations, we obtain the following generalization of \Cref{lemma1} where only state-dependent jumps were considered.

\begin{lemma}\label{lemma:reachability}
For a given switching signal $\switch\in\calS$ with $K\in\N_0$ switches at times $t_k$ with $k=1,\ldots, K$, consider the recursive relations
	\begin{align}\label{eq23}
		\begin{aligned}
			\tilde{\calM}_0 &\vcentcolon= \tilde{\calR}_{\switch_0}, &
			\tilde{\calM}_k &\vcentcolon= \tilde{\calR}_{\switch_k}+\ee^{\fJ_{\switch_k}\tau_k}\tilde \calM_{k-1} ,\quad k= 1,\ldots, K,\\
			\calM_0 &\vcentcolon= \calR_{\switch_0}, & 
			\calM_k &\vcentcolon= \calR_{\switch_k}+\ee^{\fJ_{\switch_k}\tau_k}\tilde \calM_{k-1} ,\quad k= 1,\ldots, K,\\
		\end{aligned}
	\end{align}
    where $\tilde{\calR}_{\switch_0}$ and $\calR_{\switch_0}$ denote the reachable and modified reachable set for the active system mode at time $t_0$ and $\tau_k \vcentcolon= t_{k+1}-t_k$ is the duration of the mode $\switch_k$. Then, the reachable and modified reachable set via $\switch$, in the time interval $[t_0,\tfinal)$, with $\tfinal=t_{K+1}$ at any time $\tfinal>t_K$, for~\eqref{eq21} are given by
	\begin{equation*}
		\calR_{\switch}(t_0,\tfinal) = \calM_K, \quad \tilde \calR_{\switch}(t_0,\tfinal) = \tilde \calM_K.
	\end{equation*}
\end{lemma}

\begin{proof}
	The proof is based on the induction principle. By definition $\calM_0 = \calR_{\switch_0} = \calR_\switch (t_0,t_1)$ and $\tilde \calM_0 = \tilde \calR_{\switch_0} = \tilde \calR_\switch (t_0,t_1)$. Further, assume 
	\begin{equation}\label{eq24}
		\calR_\switch(t_0,t_k)=\calM_{k-1},\quad \tilde \calR_\switch(t_0,t_k)=\tilde \calM_{k-1}
	\end{equation}
	for some $k\le K$. Let $\fz_{k+1}\in \calM_k$, i.e., there exist $\SwDifStateRedBis_{k}\in\tilde \calM_{k-1}$, and $\fz_{\inp}\in\calR_{\switch_k}$ such that 
	\begin{equation}\label{eqn:zkplus1}
		\fz_{k+1}\in \calM_k,\quad\quad \fz_{k+1} = \ee^{\fJ_{\switch_{k}}\tau_k}\SwDifStateRedBis_k+\fz_{\inp}.
	\end{equation}
	From \eqref{eq24}, using the definition of modified reachability via $\switch$, it follows that there exists a control ${\inp_1}$ defined in the time interval $[t_0,t_k)$ such that the associated solution $\SwDifStateRed_1$ of~\eqref{eq21} in this interval satisfies ${\fz_1}(t_0^-) = \zeroVec$ and $\tilde\StateTran_{\switch(t_k^+),\switch(t_k^-)}\SwDifStateRed_1(t_k^-)=\SwDifStateRedBis_1(t_k^-)-\tilde	\fcB_{\switch(t_k^+),\switch(t_k^-)}\hat{\fU}_{1,\switch_{k-1}}(t^{-}_k)$. Now let us extend $(\SwDifStateRed_1 ,{\inp_1})$ to the time interval $[t_0,t_{k+1})$ via 
	\begin{equation*}
		(\SwDifStateRed_1(t),{\inp_1}(t))\;\vcentcolon=\;\left(  \ee^{\fJ_{\switch_k}(t-t_k)}\left(\tilde \StateTran_{\switch(t_k^+),\switch(t_k^-)}\SwDifStateRed_1(t_k^-)+\tilde	\fcB_{\switch(t_k^+),\switch(t_k^-)}\hat{\fU}_{1,\switch_{k-1}}(t^{-}_k) \right),\zeroVec\right)
	\end{equation*}
	for $t\in[t_k,t_{k+1})$. Using~\eqref{eq25}, we immediately obtain that $(\SwDifStateRed_1,\inp_1)$ is a solution of~\eqref{eq21} on $[t_0,t_{k+1})$. Moreover, there exists a solution $(\SwDifStateRed_2,\inp_2)$ of mode $\switch_k$ on $(t_k,t_{k+1})$ with $\SwDifStateRed_2(t_k^+)=\zeroVec$ and $ \SwDifStateRed_2(t_{k+1}^-)=\fz_{\inp}$. Further, we set $\left(\SwDifStateRed_2(t), {\inp_2}(t) \right)=(\zeroVec,\zeroVec)$ for all $t\in[t_0,t_k]$ and observe that $\left(\SwDifStateRed_2, \inp_2 \right)$ is a solution of the switched system \eqref{eq21} on $[t_0,t_{k+1})$ with $\fz(t_0^-)=\zeroVec$. Note that here it is crucial that ${\inp_2}(t)=\zeroVec$ for all $t\in[t_0,\;t_k)$ because this ensures ${\fU}_{2,\switch_{k-1}}(t^-_k)=\zeroVec$.
	By linearity $(\SwDifStateRed,\inp) \vcentcolon= (\SwDifStateRed_1 + \SwDifStateRed_2,{\inp_1} + \inp_2)$ is still a solution of \eqref{eq21} in $[t_0,t_{k+1}) $ with $\fz(t_0^-)=\zeroVec$ and
	\begin{align*}
		\SwDifStateRed(t_{k+1}^-) \;= &\;\SwDifStateRed_1(t_{k+1}^-)+\SwDifStateRed_2(t_{k+1}^-)\\
		 =&\; \ee^{\fJ_{\switch_k}\tau_k}\left(\tilde \StateTran_{\switch(t_k^+),\switch(t_k^-)}\SwDifStateRed_1(t_k^-)+\tilde	\fcB_{\switch(t_k^+),\switch(t_k^-)}\hat{\fU}_{1,\switch_{k-1}}(t^{-}_k) \right)+\fz_{\inp} \\
		=&\;\ee^{\fJ_{\switch_k}\tau_k}	\SwDifStateRedBis_k+\fz_{\inp} \;=\; \fz_{k+1},
	\end{align*}
	which together with \eqref{eqn:zkplus1} implies $\fz_{k+1}\in\calR_\switch(t_0,t_{k+1})$ and hence $\calM_{k}\subseteq \calR_\switch(t_0,t_{k+1})$. Considering $\tilde \calM_{k}$ and thus $\fz_{\inp}\in\tilde\calR_{\switch_k}$, repeating the same argument, we obtain $\tilde \calM_{k}\subseteq \tilde \calR_\switch(t_0,t_{k+1})$.
	
	To show the reverse inclusion, let $\fz_{k+1}\in\calR_\switch(t_0,t_{k+1})$, i.e., there exists a control input $\inp$ such that the associated solution $(\SwDifStateRed,\inp)$ of~\eqref{eq21} satisfies $\SwDifStateRed(t^{-}_{k+1}) = \fz_{k+1}$. From $\SwDifStateRed(t_k^-)\in\calR_\switch(t_0,t_{k})=\calM_{k-1}$ and
	\begin{equation*}
		\fz_{\inp}\;\vcentcolon=\;\int_{t_k}^{t_{k+1}}\ee^{\fJ_{\switch_k}(t_{k+1}-t)}\tilde \fB_{\switch_k}\inp(t)\;\dt\in\calR_{\switch_k},
	\end{equation*}
	it follows immediately from \eqref{eq25} and the definition of the modified reachable set that 
	\begin{align*}
		\fz_{k+1} = \SwDifStateRed({t^{-}_{k+1}}) &= \ee^{\fJ_{\switch_k}\tau_k}\left(\tilde\StateTran_{\switch(t_k^+),\switch(t_k^-)}\SwDifStateRed(t_k^-)+\tilde	\fcB_{\switch(t_k^+),\switch(t_k^-)}\hat{\fU}_{1,\switch_{k-1}}(t^{-}_k)\right)+\fz_{\inp}\\
		&\in \ee^{\fJ_{\switch_k}\tau_k}\tilde \calM_{k-1}    +\calR_{\switch_k} = \calM_{k}.
	\end{align*}
	The proof for $\tilde \calR_\switch(t_0,t_{k+1})\subseteq \tilde \calM_k$ follows analogously by observing that the input at time $t^{-}_{k+1}$ appears in $\fz_{\inp}$; therefore, it is enough to consider $\fz_{\inp}\in \tilde \calR_{\switch_k}$ as prescribed by the definition of $\tilde \calM_k$.
\end{proof}

The next theorem relates the reachable set of a system with input-state-dependent jumps to the reachable set of a system where the jumps are only state-dependent but with augmented input matrices.

\begin{theorem}
	\label{teo:reachability}
	The reachable set via $\switch\in\calS$ of~\eqref{eq21} is contained within the reachable set via $\switch$ of the system
	\begin{align}\label{eq26}
		\left\{\quad\begin{aligned}
			\dot{\SwDifStateRed}_{\switch_k}(t) &=\fJ_{\switch_k}\SwDifStateRed_{\switch_k}(t) + \bar{\fB}_{\switch_k,\switch_{k-1}} \bar{\inp}_{\switch_{k}}(t), &\qquad t\in(t_k,t_{k+1}),\\
			\SwDifStateRed_{\switch_k}(t_k^+) &=\tilde \StateTran_{\switch_k,\switch_{k-1}}\SwDifStateRed_{\switch_{k-1}}(t_k^{-}), & \SwDifStateRed(t_0^{-}) = \zeroVec,
		\end{aligned}\right.
	\end{align}
	where
	\begin{align}
		\label{eq1}
		\bar{\fB}_{\switch_k,\switch_{k-1}} &\vcentcolon= \begin{bmatrix}
			\tilde\fB_{\switch_k} &\tilde	\fcB_{\switch_k,\switch_{k-1}}
		\end{bmatrix}, &
		\bar{\inp}_{\switch_k}(t) &\in \R^{\inpDim+\inpDim\nu_{\switch_{k-1}}}.
	\end{align}	
\end{theorem}
\begin{proof}
	For a fixed signal $\switch\in\calS$, the reachable set via $\switch$ of~\eqref{eq21} is given by \Cref{lemma1} and the observation after \Cref{def:rech:ex:reach}. Using \cite[Lem.~2.3]{SunG05}, the smallest $\fJ_{\switch_k}$-invariant subspace that contains $\img\left(\tilde	\fcB_{\switch(t_k^+),\switch(t_k^-)}\right)$, i.e. 
	\begin{align}\label{eq:20}
		\calG_{\switch_k,\switch_{k-1}}\;=&\; \left \langle \fJ_{\switch_k}\;\Big|\;\img\left(\tilde	\fcB_{\switch_k,\switch_{k-1}}\right)\right\rangle,
	\end{align}
	is equal to the subspace 
	\begin{align}\label{eq20}
		\begin{aligned}
			\calW_{\switch_k,\switch_{k-1}} &\vcentcolon= \spann\left\{\ee^{ \fJ_{\switch_k}t}\tilde	\fcB_{\switch_k,\switch_{k-1}}\fa \mid t\in [0,\tau_{k}], \fa\in\R^{\inpDim\nu_{\switch_{k-1}}}\right\}\\
			&\phantom{\vcentcolon}= \spann\left\{\ee^{ \fJ_{\switch_k}t}\fb \mid t\in [0,\tau_{k}],\fb\in 
			\img{\left(\tilde	\fcB_{\switch_k,\switch_{k-1}}\right)}\right\}.
		\end{aligned}
	\end{align}
	Using the equivalence of \eqref{eq:20} and \eqref{eq20} and the definition of the modified reachable set in a prescribed time interval, we can rewrite $\calM_k$ in \eqref{eq23} as 
	\begin{align}
		\begin{aligned}
			{\calM}_k \;=&\;\calR_{\switch_k}+\ee^{\fJ_{\switch_k}\tau_k}\tilde \calM_{k-1}\\ \subseteq&\;\calR_{\switch_k}+\ee^{\fJ_{\switch_k}\tau_k}\left(\tilde\StateTran_{{\switch_k},{\switch_{k-1}}}\calM_{k-1}+\img\left(\tilde	\fcB_{\switch_k,\switch_{k-1}}\right)\right)\\
			\subseteq&\; \calR_{\switch_k}  +\ee^{\fJ_{\switch_k}\tau_k}\tilde \StateTran_{\switch_k,\switch_{k-1}}\calM_{k-1}+  \left \langle \fJ_{\switch_k}\;\Big|\;\img\left(\tilde	\fcB_{\switch_k,\switch_{k-1}}\right)\right\rangle      \\
			=&\;   \left \langle \fJ_{\switch_k}\;\Big|\;\img(\tilde \fB_{\switch_k})+\img\left(\tilde	\fcB_{\switch_k,\switch_{k-1}} \right)\right\rangle  +\ee^{\fJ_{\switch_k}\tau_k}\tilde\StateTran_{\switch_k,\switch_{k-1}}\calM_{k-1} \\
			=&\;\left \langle \fJ_{\switch_k}\;\Big|\;\img\left(\bar{\fB}_{\switch_k,\switch_{k-1}}\right)\right\rangle + \ee^{\fJ_{\switch_k}\tau_k}\tilde\StateTran_{\switch_k,\switch_{k-1}}\calM_{k-1},
		\end{aligned}
	\end{align}
for all $k = 1,\ldots, K$, which coincides with the reachability set generated by $\switch$ over the time interval considered for the state-dependent jump system \eqref{eq26}; see \Cref{lemma1}.
\end{proof}

\subsection{Observability set for state-dependent impulses}\label{eqn:ext:obs}
We recall from~\eqref{eq:59} that the output function is given as
\begin{align}
	\label{eqn:out:jumps}
	\begin{aligned}
		\out(t) =& \tilde \fC_{\switch_k}  \SwDifStateRed_{\switch_k}(t) + \fD_{\switch_k}\fU_{\switch_k}(t), \quad t\in(t_k,t_{k+1}),\\
	\out[t_k] =&-	\tilde\fcC_{\switch_{k},\switch_{k-1}}\begin{bmatrix}
	\mathbf{1}^{\T}_{{\stateDim_{\fJ_{{\switch_{k-1}}}}}}\delta_{t_k}&\ldots&	\mathbf{1}^{\T}_{{\stateDim_{\fJ_{{\switch_{k-1}}}}}}\delta_{t_k}^{\nu_{k}-1}
\end{bmatrix}^{\T}{\SwDifStateRed}_{\switch_{k-1}}(t^{-}_k)+ \fU_{\switch_k}^{\imp}(t_k),
	\end{aligned}
\end{align}
where $\tilde \fC_j\vcentcolon= \fC_{j} \fT_{j}   \begin{bmatrix}
	\fI_{\stateDim_{\fJ_{j}}} & \zeroMat
\end{bmatrix}^\T  $ for $j=1,\ldots,M$. We define the observable set as the orthogonal complement of the unobservable set. We then formally specify the unobservable set for $\switch\in\calS$ over the time interval $(t, \tfinal)$ as follows.

\begin{definition}\label{def2}
	For a given switching signal $\switch\in\calS$, the unobservable subspace of the switched system~\eqref{eq:59} for this switching signal on the time interval $(t,\tfinal)$ is defined as
	\begin{align}
		\calU_{\switch}(t,\tfinal) \vcentcolon= \left\{ \SwDifStateRed_{\switch(t^+)}(t^+) \,\left| \, \begin{aligned}
		 & \SwDifStateRed_{\switch}\text{ is a solution of \eqref{eq:59} with } \inp \equiv \zeroVec \text{ such that } \out(s) = \zeroVec\\ 
		 &\text{ for all } s\in(t,\tfinal)\end{aligned}\right.\right\}
	\end{align}
\end{definition}

\begin{proposition}
	Let $\switch\in\calS$. Then $\calU_{\switch}(t_0,\infty) = \calU\calO_{\switch}$ where $ \calU\calO_{\switch}$ is as defined in \Cref{def1}.
\end{proposition}

\begin{proof}
	Let $\fz \in \calU_{\switch}(t_0,\infty)$. Then, by definition, we have $\out(t;\boldsymbol{\phi}(t,t_0,\fz,\zeroVec,\switch))=\zeroVec$ and $\out(t;\boldsymbol{\phi}(t,t_0,\zeroVec,\zeroVec,\switch))=\zeroVec$ since zero initial conditions and constant zero input lead to the null trajectory. Hence, $\fz\in\calU\calO_{\switch}$. Conversely, let $\fz\in \calU\calO_{\switch}$, i.e.,
	\begin{equation}\label{eqn:zero:out}
		\out(t;\boldsymbol{\phi}(t,t_0,\fz,\inp,q))-	\out(t;\boldsymbol{\phi}(t,t_0,\zeroVec,\inp,q))=\zeroVec.
	\end{equation}
	Linearity immediately implies $\out(t;\boldsymbol{\phi}(t,t_0,\fz,\zeroVec,q))=\zeroVec$ and hence $\fz\in\calU_{\switch}(t_0,\infty)$.
\end{proof}
With this groundwork in place, we can now present the following result, which describes the unobservable set of system \eqref{eq:59}. This result extends \cite[Lem.~4.7]{Hos22} (see also \cite{KueT16}), in which the impulsive response at the switching instants is not taken into account.
\begin{lemma}\label{lemma3}
	Let $\switch\in\calS$ with  $K\in \N$ switches at time points $t_k$ with $k=0,\ldots, K$, define $\tau_k \vcentcolon= t_{k+1}-t_k$, and consider the recursive relation 
	\begin{align}\label{eqn:rec:unob}
		\begin{aligned}
			{\calN}_K &\vcentcolon= \calU_{\switch_K},\\
			{\calN}_k &\vcentcolon= \calU_{\switch_k} \cap \left( \ee^{-\fJ_{\switch_k}\tau_k}\left(\left(\tilde	\StateTran_{{\switch_{k+1}},{\switch_{k}}}^{-1}\left(\calN_{k+1}\right)\right)\cap\ker(	\tilde\fcC_{\switch_{k+1},\switch_{k}})\right)\right),\quad k= K-1,\ldots, 0,\\
		\end{aligned}
	\end{align}
	where $\calU_{\switch_k}=\big\langle \ker(\tilde \fC_{\switch_k}) \mid \fJ_{\switch_k}\big \rangle$ and $\StateTran_{{\switch_{k+1}},{\switch_{k}}}^{-1}(\calN_{k+1})$ denotes the preimage defined in~\eqref{eqn:pre:img}.
	Then, the unobservable set via $\switch$ for~\eqref{eqn:switchedODEjump} is given by $\calN_k = \calU_{\switch}(t_k,\tfinal)$
	for any $\tfinal>t_K$ and for any $k=0,\ldots,K$.
\end{lemma}
\begin{proof}
	We proceed by induction. For $k=K$, we have $\calN_K = \calU_{\switch_k} = \calU_{\switch}(t_K,\tfinal)$. 
	Assume now $\calN_{k+1}=\calU_{\switch}(t_{k+1},\tfinal)$ for some $k \leq K-1$. Let $\fz_k\in\calN_k$, then $\fz_k\in\calU_{\switch_k}$ and there exists
	\begin{equation}\label{eq:1d}
		\fz_{k+1}\in\left(\left(\tilde	\StateTran_{{\switch_{k+1}},{\switch_{k}}}^{-1}\left(\calN_{k+1}\right)\right)\cap\ker(\tilde\fcC_{\switch_{k+1},\switch_{k}})\right)
	\end{equation}
	such that $\fz_{k+1}=\ee^{\fJ_{\switch_k}\tau_k}\fz_k$. Thus, there exists a solution $\fz$ of \eqref{eq:59}, with $\fu=\zeroVec$ on $[t_k,\tfinal)$ and $\fz(t_k^+)=$~$\fz_k$ satisfying $\out=\zeroVec$ on $(t_k,t_{k+1})$, since $\fz_k\in\calU_{\switch_k}$. Moreover, $\out=\zeroVec$ on $[t_{k+1}, \tfinal)$ since
	\[
	\fz(t^{-}_{k+1})=\fz_{k+1}\in\left(\left(\tilde	\StateTran_{{\switch_{k+1}},{\switch_{k}}}^{-1}\left(\calU_{\switch}(t_{k+1},\tfinal)\right)\right)\cap\ker\left(\tilde\fcC_{\switch_{k+1},\switch_{k}}\right)\right)
	\]  
	where we used the inductive assumption $\calN_{k+1}=\calU_{\switch}(t_{k+1},\tfinal)$ on \eqref{eq:1d}.
	This implies that $\fz_k\in\calU^{\switch}_{[t_k,\tfinal)}$.
	
	Conversely, let $\fz_k\in\calU^{\switch}_{[t_k,\tfinal)}$. Then, there exists a solution of \eqref{eq:59} in $[t_k,\tfinal)$ with constant input $\inp=\zeroVec$ and $\fz(t_k^+)=\fz_k$ that satisfies $\out(t)=\zeroVec$ for all $t\in[t_k,\tfinal)$. Clearly, because the solution is such that $\out=\zeroVec$ on $[t_k, t_{k+1})$, $\fz_k\in\calU_{\switch_k}$. Moreover, using this solution at time $t_{k+1}^{-}$ and the fact that $\out=\zeroVec$ on $[t_{k+1}, \tfinal)$, we can define
	\begin{align*}
    \begin{aligned}
	\fz_{k+1}\;\vcentcolon=&\;\fz(t^-_{k+1})\;\in\;\left(\left(\tilde	\StateTran_{{\switch_{k+1}},{\switch_{k}}}^{-1}\left(\calU_{\switch}(t_{k+1},\tfinal)\right)\right)\cap\ker(\tilde\fcC_{\switch_{k+1},\switch_{k}})\right)\\
    =
    &\;\left(\tilde	\StateTran_{{\switch_{k+1}},{\switch_{k}}}^{-1}\left(\calN_{{k+1}}\right)\right)\cap\ker(\tilde\fcC_{\switch_{k+1},\switch_{k}})
    \end{aligned}
	\end{align*}
	where we used the inductive principle assumption and $\fz_{k+1}\in\ker(\tilde\fcC_{\switch_{k+1},\switch_{k}})$ comes from the fact that $\out(t_{k+1})=\zeroVec$ which, by \eqref{eqn:out:jumps}, directly implies $\fz(t_{k+1}^-)\in\ker(\tilde\fcC_{\switch_{k+1},\switch_{k}})$. From $\fz_{k+1}=\ee^{\fJ_{\switch_k}\tau_k}\fz_k$, it follows that 
	\[
	\fz_k\in\ee^{-\fJ_{\switch_k}\tau_k}\{\fz_{k+1}\} \subseteq \ee^{-\fJ_{\switch_k}\tau_k}\left(\left(\tilde	\StateTran_{{\switch_{k+1}},{\switch_{k}}}^{-1}\left(\calN_{{k+1}}\right)\right)\;\cap\;\ker(\tilde\fcC_{\switch_{k+1},\switch_{k}})\right).
	\]
	Then, by the fact that $\fz_k\in\calU_{\switch_k}$ and definition \eqref{eqn:rec:unob}, we can conclude that $\fz_k\in\calN_k$.
\end{proof}

\begin{remark}
	Since the input function is considered to be zero throughout the time interval, cf.~\Cref{def2}, the input-dependent part of the jumps in system \eqref{eq:59} does not play a role in defining the unobservable set.
\end{remark}

Analogously to the reachable set, we now derive a description of the unobservable set of system \eqref{eq:59} by relating it to the unobservable set of an appropriately defined switched system whose output function does not include Dirac delta impulses at the switching instants.
\begin{theorem}\label{teo2bis}	
	The unobservable set via the fixed switched signal $\switch\in\calS$ of system \eqref{eq:59} equals the unobservable set of the system
	\begin{align}\label{eq26:tris}
		\left\{\quad\begin{aligned}
			\dot{\SwDifStateRed}_{\switch_k}(t) &= \fJ_{\switch_k}\SwDifStateRed_{\switch_k}(t), &\quad t\in(t_k,t_{k+1}),\\
			\SwDifStateRed_{\switch_k}(t_k^+) &= \tilde\StateTran_{{\switch_k},{\switch_{k-1}}} \SwDifStateRed_{\switch_{k-1}}(t_k^{-})  &\SwDifStateRed(t_0^{-})=\fz_0,\\
			{\out}_{\switch_k}(t) &= \bar{\fC}_{\switch_{k+1},\switch_{k}}\SwDifStateRed_{\switch_k}(t)
		\end{aligned}\right.
	\end{align}
	where
	\begin{align}\label{eq1:bis}
		\bar{\fC}_{\switch_{k+1},\switch_{k}} &\vcentcolon= \begin{bmatrix}
			\tilde\fC_{\switch_k}^{\T}& 	-\fcC^\T_{\switch_{k+1},\switch_{k}} 
		\end{bmatrix}^\T, &
		{\out}_{\switch_k}(t) &\in \R^{\outDim\nu_{\switch_{k}}}.
	\end{align}	
\end{theorem}

\begin{proof}
	Observe that~\eqref{eq26:tris} does not depend on any input. Indeed, in the study of the observable set, the input function can be assumed to be constantly zero. For a fixed switching signal~$\switch\in\calS$, the observable set of system \eqref{eq:59} is given by \Cref{lemma3}. Using a classical result, see for instance \cite[Thm.~2.3.1]{Dai89}, we can state that the largest $\fJ_{\switch_k}$-invariant subspace contained in $\ker(\fcC_{\switch_{k+1},\switch_{k}})$, i.e. 
	\begin{align}\label{eq20:}
		\calT_{\switch_{k+1},\switch_{k}} &= \left \langle \ker(\fcC_{\switch_{k+1},\switch_{k}})\;\Big|\;\fJ_{\switch_k} \right\rangle,
	\end{align}
	is equal to the subspace 
	\begin{align}\label{eq20:bis}
		\begin{aligned}
			\calW_{\switch_{k+1},\switch_{k}}\;\vcentcolon=&\;\spann\left\{\ee^{-\fJ_{\switch_k}t}\fz : t\in [0,\tau_k],\fz\in\ker(\fcC_{\switch_{k+1},\switch_{k}})\right\}.
		\end{aligned}
	\end{align}
	Using \eqref{eq20:}-\eqref{eq20:bis}, we can rewrite $\calN_k$ in \eqref{eqn:rec:unob} as 
	\begin{align}
		\begin{aligned}
			{\calN}_k \;=&\; \calU_{\switch_k}\;\cap\;\ee^{-\fJ_{\switch_k}\tau_k}\left(\left(\tilde	\StateTran_{{\switch_{k+1}},{\switch_{k}}}^{-1}\left(\calN_{k+1}\right)\right)\cap\ker(\fcC_{\switch_{k+1},\switch_k})\right)\\
			\subseteq&\; \calU_{\switch_k}  \;\cap\;\ee^{-\fJ_{\switch_k}\tau_k}\tilde\StateTran_{{\switch_{k+1}},{\switch_{k}}}^{-1}\left(\calN_{k+1}\right)\;\cap\;  \left \langle\ker(\fcC_{\switch_{k+1},\switch_k}) \;\mid\; \fJ_{\switch_k}\right\rangle      \\
			=&\; \left \langle\ker(\tilde\fC_{\switch_k})\;\cap\;\ker(\fcC_{\switch_{k+1},\switch_{k}}) \;\mid\; \fJ_{\switch_k}\right\rangle \; \cap\;\ee^{-\fJ_{\switch_k}\tau_k}\tilde\StateTran_{{\switch_{k+1}},{\switch_{k}}}^{-1}\left(\calN_{k+1}\right) \\
			=&\; \left \langle\ker(\bar{\fC}_{\switch_k}) \;\mid\; \fJ_{\switch_k}\right\rangle\;\cap\; \ee^{-\fJ_{\switch_k}\tau_k}\tilde\StateTran_{{\switch_{k+1}},{\switch_{k}}}^{-1}\left(\calN_{k+1}\right),
		\end{aligned}
	\end{align}
	for all $k=1,\ldots, K$, which coincides with the unobservable set of system \eqref{eq26:tris}; see \cite[Lem.~4.7]{Hos22}. Observing that we can choose any duration $\tau_k>0$ for a mode $\switch_k$ we can conclude that the sets are equal.
\end{proof}
%----------------------------------------------

\subsection{The \PBR for \SDAE}
\label{sec:PBR-sDAE} 
Let us introduce the following switched system with state-dependent jumps
\begin{align}
	\label{eqn:switchedODEjump:tris}
	\left\{\quad\begin{aligned}
		\dot{\SwDifStateRed}_{\switch_k}(t) &= \fJ_{\switch_k}\SwDifStateRed_{\switch_k}(t) +	\bar{\fB}_{\switch_k,\switch_{k-1}} \bar{\inp}_{\switch_k}(t), & t\in(t_k,t_{k+1}),\\
		\SwDifStateRed_{\switch_k}(t_k^{+}) &=  \tilde\StateTran_{{\switch_k},{\switch_{k-1}}} \SwDifStateRed_{\switch_{k-1}}(t_k^{-}), & \SwDifStateRed(t_0^{-}) = 0,\\
		{\out}_{\switch_k}(t) &= \bar{\fC}_{\switch_{k+1},\switch_{k}}\SwDifStateRed_{\switch_k}(t)
	\end{aligned}\right.
\end{align}
where $\bar{\fB}_{\switch_k,\switch_{k-1}}$ and $\bar{\fC}_{\switch_{k+1},\switch_{k}}$ are as in \eqref{eq1} and \eqref{eq1:bis}, respectively. The observability set of \eqref{eqn:switchedODEjump:tris} is the same as that of \eqref{eq:59} by \Cref{teo2bis}; similarly, the reachability set also coincides, provided that, for the components of the augmented input defined in \eqref{eq26}, we allow impulsive inputs, since this corresponds to incorporating an associated non-zero initial term into the initial data; see, e.g., \cite[Lem.~B5]{Hos22}. Thus, since the input-output map of \eqref{eq:59} can be recovered from that of \eqref{eqn:switchedODEjump:tris}, after a suitable definition of the input function, we aim to apply the \PBR method presented in \Cref{subsec:PBR-SLS} to \eqref{eqn:switchedODEjump:tris}. With this in mind, we first solve the $M$ pairs of \GLEs of the following form
\begin{subequations}
	\label{eqn:GLE:c}
	\begin{align}
		\fJ_i \fcP_i+\fcP_i\fJ_i^\T+\sum_{j=1}^{M}\fN_{i,j} \fcP_i\fN_{i,j}^\T+\sum_{j=1}^{M}\sum_{k=1}^{M}\fB_{j,k}\fB_{j,k}^\T  &= \zeroVec,\label{eqn:GLE:reach:c}\\
		\fJ_i^\T \fcQ_i + \fcQ_i\fJ_i+\sum_{j=1}^{M} \fN^\T_{i,j} \fcQ_i\fN_{i,j} + \sum_{j=1}^{M}\sum_{k=1}^{M} \fC_{j,k}^\T \fC_{j,k} &= \zeroVec,\label{eqn:GLE:observ:c}
	\end{align}
\end{subequations}
where $\fN_{i,j}=\tilde\StateTran_{i,j} \fJ_{j}\tilde\StateTran_{j,i} -\fJ_{i}$, $ \fB_{j,k}=\tilde\StateTran_{i,j}\bar \fB_{j,k}$, and $ \fC_{j,k}=\bar \fC_{j,k}\tilde\StateTran_{k,i}$ .

Following the procedure detailed in \Cref{subsec:PBR-SLS} we obtain the projection matrices $\fV_i, \fW_i $ from the truncated \SVD of the product of the Cholesky factors of each matrix pair $(\fcP_i,\fcQ_i)$. Assuming we truncate at $\stateDimRed$ for all $i=1,\ldots,M$ we end-up with the \ROM 
\begin{equation}\label{eqn:new:red:system:b}
	\switchedSysRedJumps \quad \left\{\quad \begin{aligned}
		\dot{\SwDifStateRedBis}(t)  &= \reduceBis{\fJ}_{\switch_k} \SwDifStateRedBis(t) + \reduceBis{\fB}_{\switch_k,\switch_{k-1}}\bar \inp_{\switch_k}(t), &	\SwDifStateRedBis(t_0) &= \zeroVec, \\
		\SwDifStateRedBis(t^{+}_{k})&=	\hat \fPi_{\switch_k,\switch_{k-1}}\SwDifStateRedBis(t^-_k),\\
		\bar{\out}_{\switch_k}(t) &= \reduceBis{\fC}_{\switch_{k+1},\switch_{k}}\SwDifStateRedBis(t),\\
	\end{aligned}\right.
\end{equation}
where $t_k$ are the switching times for the particular $\switch\in\calS$, $\switch_k\vcentcolon=\switch(t_k)$, and 
\begin{align*}
	\begin{aligned}
	\reduceBis{\fJ}_j\; &\vcentcolon=\; \fW_j^\T \fJ_j \fV_j \in \R^{\stateDimRed \times \stateDimRed}, \qquad
	\reduceBis{\fB}_{j,i} \;\vcentcolon=\; \fW_j^\T \bar \fB_{j,i}\in \R^{\stateDimRed\times( \inpDim+\inpDim\indDAE_i)}, \\
	\hat \fPi_{j,i}\;&\vcentcolon=\;\fW^{\T}_{j}\tilde \StateTran_{j,i}\fV_{i}\in\R^{\stateDimRed\times\stateDimRed},\quad\;\;\; \reduceBis{\fC}_{j,i} \;\vcentcolon=\;  \bar \fC_{j,i}\fV_i \in\R^{\outDim\indDAE_j\times \stateDimRed},
	\end{aligned}
\end{align*}
for $i,j\in\switchingSet$; we call this procedure the \PBR for \SDAEs.  
To evaluate the accuracy of this reduction procedure, by following \Cref{subsubsec:accuracy:PBR:SLS}, we can directly apply \Cref{teo6} and thus have an evaluable a-posteriori error bound for the reduction error. 

\begin{remark}
    Note that, with respect to the \PBR for \SLS presented in \cite[Sec.~4]{ManU26}, the \SLS in \eqref{eqn:switchedODEjump:tris} exhibits state-dependent jumps at the switching times. Nevertheless, the \PBR naturally generalizes to \SLS with state-dependent jumps by suitably defining the $\fN_{i,j}$ matrices as in \eqref{eqn:GLE:c} and incorporating the corresponding state jumps into the \ROM \eqref{eqn:new:red:system:b}. Since this extension is straightforward, we omit the proof for brevity and avoid repeating the arguments of \cite[App.~B]{ManU24} in the presence of state-dependent jumps.

\end{remark}

\begin{remark}\label{rmk-l2-linf}
Using \Cref{teo6}, we have a bound for the error
	\begin{equation*}
		\left(\int_{0}^{t}\|\out_{\switch_k}(s)-\bar{\out}_{\switch_k}(s)\|^2_{2}\,\ds\right)^{\frac{1}{2}},
	\end{equation*}
	being $\out_{\switch_k}$ and $\bar{\out}_{\switch_k}$ the output functions of system \eqref{eqn:switchedODEjump:tris} and \eqref{eqn:new:red:system:b}, respectively. Looking at system \eqref{eq:59}, recalling that we denote with $\calT_{\switch(t)}\vcentcolon=\{t_1,\ldots,t_K\}$ and $K\in\N$ the set of switching times up to time $t$, it appears more natural to bound the following error
	\begin{equation*}
		\left(\int_{0}^{t}\|\out(s)-\bar{\out}(s)\|^2_{2}\,\ds+\sum_{k=1}^{K}\|\out(t_k)-\bar{\out}(t_k)\|^{2}\right)^{\frac{1}{2}},
	\end{equation*}
	with $\bar{\out}$ the output of a suitably constructed \ROM for system \eqref{eq:59}.
	This, for a finite time interval $[0,t]$, could be achieved by bounding the $L_{\infty}$-norm of the output reduction error over that interval, i.e.
		\begin{equation*}
\max_{s\in[0,t]}\|\out_{\switch_k}(s)-\bar{\out}_{\switch_k}(s)\|_{2}.
	\end{equation*}
	A balancing based reduction procedure is, by construction, tailored for $L_2$-norm output error control and does not appear straightforward, at the moment, to have this procedure controlling the $L_{\infty}$-norm of the output error. Other types of reduction procedures, like the ones based on rational interpolations \cite{AntBG20}, are designed to control the $L_{\infty}$-norm of the output error; however, such types of reduction procedures, in the context of \SLS (and thus also \SDAE), are, to the best of our knowledge, still to be investigated.
\end{remark}

%-----------------------------------------------------------------------------%
\section{Numerical considerations on the \PBR for \SDAE}
\label{sec:numericalDetails}
In large-scale scenarios, it is essential to take advantage of sparsity and to employ state-of-the-art techniques at every stage of the \MOR procedure described in \Cref{sec:MORforDAE}. The approach involves two main computational steps:
\begin{enumerate}
\item decoupling the differential and impulsive parts of the \SDAE in order to obtain either implicit or explicit access to \eqref{eq:59};
\item computing the $M$ pairs of projection matrices $(\fV_i,\fW_i)$, which entails solving the \GLE in \eqref{eqn:GLE:c}.
\end{enumerate}
We address the first aspect in \Cref{subsec:num:Wong}, whereas in \Cref{eqn:app:GLE} we discuss the approximation of the \GLE solution.

%-----------------------------------------------------------------------------%
\subsection{Discussion on the computational efficiency of the decoupling}
\label{subsec:num:Wong}
The first numerical issue concerns the determination of the two subspaces $\calV^\star$ and $\calW^\star$ from~\eqref{eqn:WongSequences:limits} and two representative matrices, $\difW$ and $\impW$ such that $\image{(\difW)}=\calV^\star$ and $\image{(\impW)}=\calW^\star$, necessary to transform the \DAE~\eqref{eqn:DAE} in the \QWF; see \Cref{teo:QWF}. To be efficient, an algorithm determining $\calV^\star$ and $\calW^\star$ for large systems, should retain the following two features:

\begin{enumerate}
	\item if the matrix pair $(\fE,\fA) $ is sparse, then forming dense matrices of size $\stateDim\times\stateDim$ to determine $\difW$, $\impW$, and  the matrices in \eqref{eqn:diffImpMatrices} should be avoided;\label{item:1}
	\item the computation of $\difW$ and $\impW$ should be efficient, i.e., involving at the most $\calO(\stateDim)$ floating point operations for sparse matrices and $\calO(\stateDim^2)$ for full matrices.\label{item:2}
\end{enumerate}
In general, a decoupling strategy that is able to satisfy both \ref{item:1} and \ref{item:2} for a general class of problems does not appear to be available. Practically, working only with sparse matrices by resorting to implicit definitions of the matrices is possible (thus satisfying \ref{item:1}). The problem relies on the efficiency, in the sense detailed in \ref{item:2}, and also accounts for the extra floating point operations necessary when the matrices are implicitly defined.

Let us observe that, by \eqref{eqn:WongSequences} and the observation that the sequence converges after $\indDAE$ steps, see \cite[Prop.~2.10]{BerIT12}  (where $\indDAE$ is the nilpotent index of the \DAE), the kernel of $\fE$ has a crucial role; indeed, from \eqref{eqn:WongSequences}, one can easily check that 
\begin{equation}
 \calW^1=\ker(\fE),\quad \calV^1=\fA^{-1}\fE \ker(\fE)^\perp.
\end{equation}
We emphasize that in many applications, see for instance \cite[Sec.~5]{MehU23}, $\indDAE$ is typically known and is less than or equal to three, i.e., the limiting spaces~$\calV^\star$ and~$\calW^\star$ in~\eqref{eqn:WongSequences:limits} are determined after a few iterations. Moreover, the systems arise in a highly structured form that either simplifies the computation or can be used to directly read off the desired matrices~$\difW$ and~$\impW$ in a sparse manner.  Therefore, it is often the case that a sparse matrix representation with orthogonal columns of $\ker(\fE)$, say $\fK$, can be straightforwardly derived. For instance, this is the case when 
\begin{equation}\label{eq:60}
	\fE=\begin{bmatrix}
		\fM & \zeroMat\\
		\zeroMat & \zeroMat
	\end{bmatrix}
\end{equation}
with $\fM$ a square invertible matrix.
\subsection{Approximation of the \GLE solution}\label{eqn:app:GLE}
Let us define the generic \GLE of the form
\begin{equation}
	\label{eqn:generic:GLE:2}
	\fA \fX+\fX\fA^\T+\sum_{j=1}^{M}\left(\fN_j \fX\fN_j^\T\right)+\fB\fB^{\T} = \zeroVec,
\end{equation}
where $\fA,\fN_j\in\R^{\stateDim\times\stateDim}$, $\fA$ is Hurwitz and $\fB\in\R^{\stateDim\times\inpDim}$. The existence and uniqueness of a solution of~\eqref{eqn:generic:GLE:2} can be studied via Kronecker algebra, i.e., by introducing the matrices
\begin{align}\label{eqn:GLE:oper}
	\lyapOper \vcentcolon= \fI_\stateDim\otimes\fA +\fA\otimes\fI_\stateDim \in\R^{\stateDim^2\times\stateDim^2} \qquad\text{and}\qquad
	\gleOper \vcentcolon= \sum_{j=1}^{M}\fN_j\otimes\fN_j \in\R^{\stateDim^2\times\stateDim^2},
\end{align}
such that the vectorized form of~\eqref{eqn:generic:GLE:2} is given by $(\lyapOper+\gleOper)\vec2(\fX)=-\vec2(\fB\fB^\T)$.
Hence, the \GLE~\eqref{eqn:generic:GLE:2} is uniquely solvable if and only if $\lyapOper+\gleOper$ is nonsingular. 

Since we aim to avoid solving a linear system of dimension $\stateDim^2$ and, consequently, storing a solution vector of dimension $\stateDim^2$, we employ the methodology presented in \cite[Sec.~3]{ManU26} to approximate the solution of \eqref{eqn:generic:GLE:2} with error control. Such a procedure is based on the use of the stationary iteration algorithm from \cite[Alg.~2.1]{ShaSS16} and a suitably derived stopping criterion. For completeness, we present the stationary iteration algorithm in \Cref{alg:statIterGLE}. It consists of fixed-point iterations in which we need to solve a classical Lyapunov equation in each iteration. To guarantee formal convergence, we make the following assumption and refer to \cite[Rem.~3.2]{ManU26} for the discussion on its general validity.
\begin{assumption}
	\label{ass:statIterGLE}
It holds that $\|\lyapOper^{-1}\gleOper\|_2<1$.
\end{assumption}

\begin{algorithm}[ht]
	\caption{Stationary iterations for the \GLEs~\eqref{eqn:generic:GLE:2}}
	\label{alg:statIterGLE}
	\begin{minipage}{\linewidth}
		\textbf{Input:} Matrices $\fA$, $\fB$,  $\fN_j$ for $j=1,\ldots,M$\\
		\textbf{Output:} $\tilde{\fZ}$ such that $\fX\approx\tilde{\fZ}\tilde{\fZ}^\T$ is an approximation to the solution of \eqref{eqn:generic:GLE:2}
	\end{minipage}
	\begin{algorithmic}[1]
		\State Set ${\fB}_1 \vcentcolon=\fB$
		\State Find $\tilde \fX_1=\tilde \fZ_1\tilde \fZ_1^\T$ approximately solving $\fA\fX_1+\fX_1\fA^\T+{\fB}_1{\fB}_1^\T=\zeroMat$, for $\fX_1=\fZ_1\fZ_1^\T$\label{line2}
		\For{$k=2,3,\ldots$}
		\State Set ${\fB}_k \vcentcolon= [\fN_1\fZ_{k-1},\ldots,\fN_M\fZ_{k-1},{\fB}_1]$
		\State Find $\tilde \fX_k=\tilde \fZ_k\tilde \fZ_k^\T$ that approximately solve \label{A2-l5}
		\begin{equation}
			\label{alg:statIterGLE:lyap}
			\fA\fX_k+\fX_k\fA^\T+{\fB}_k{\fB}^\T_k = \zeroMat
		\end{equation}
		\phantom{\textbf{for}}for $\fX_k=\fZ_k\fZ_k^\T$
		\State \textbf{if} {sufficiently accurate} \textbf{then} stop
		\EndFor 
	\end{algorithmic}
\end{algorithm}
We denote the approximate solution of \eqref{alg:statIterGLE:lyap}, at iteration $k$ of \Cref{alg:statIterGLE}, by $\tilde{\fX}_k$ and observe that $\tilde{\fX}_k=\tilde{\fZ_k}\tilde{\fZ}_k^{\T}$ with $\tilde \fZ_k\in\R^{\stateDim\times\stateDim_k}$. In \cite[Sec.~3]{ManU26}, the stopping criteria for line 5 and 6 of \Cref{alg:statIterGLE}, are defined on the base of a suitably derived error bound for $\|\fX-\tilde{\fX}_k\|_2$. Here, we introduce a different stopping criteria with the aim of computing an approximation $\tilde{\fX}_k$ that, for a given $\tol>0$, satisfies the relative error condition
\begin{equation*}
\frac{\|\fX-\tilde{\fX}_k\|_2}{\|\fX \|_2}\;\le\;\tol.
\end{equation*}
Having a stopping criteria based on relative errors makes the convergence of \Cref{alg:statIterGLE} more robust for floating-point arithmetic since numbers are stored with a fixed number of significant digits.
\subsubsection{Relative error bound for approximate solution of the Lyapunov matrix equation}
Assume that $\fX_{\indKS}\in\Spsd{\stateDim}$ is the approximation of the solution $\fX_k\in\Spsd{\stateDim}$ of the Lyapunov equation~\eqref{alg:statIterGLE:lyap} in the $\indKS$th step (and $k$th iteration of \Cref{alg:statIterGLE}) of an iterative procedure that construct a sequence of approximations converging to the exact solution. We denote by
\begin{equation}
	\label{eqn:lyap:residual}
	\fR_\indKS \vcentcolon= \fA\fX_\indKS + \fX_\indKS\fA^\T + {\fB}_k{\fB}_k^\T
\end{equation}
the corresponding residual and obtain the following residual based relative error relation.

\begin{proposition}[Relative error bound for approximate solution of Lyapunov equations]
	\label{prop1}
	Consider the Lyapunov equation~\eqref{alg:statIterGLE:lyap} with $\fA$ Hurwitz, and assume that $\fX_\indKS\in\Spsd{\stateDim}$ is an approximation of the unique solution $\fX_k\in\Spsd{\stateDim}$ of~\eqref{alg:statIterGLE:lyap}. Let $\fE_\indKS \vcentcolon= \fX_\indKS-\fX_k$ and assume that 
	\begin{equation}\label{eqn:ass:prop}
	\|\fX_{\indKS}\|\;\geq\;\frac{\|\fR_\indKS\|_{\Frob}}{\sigma_{\min}(\lyapOper)}
	\end{equation}
	holds, where $\sigma_{\min}(\lyapOper)$ denotes the smallest singular value of $\lyapOper$. Then
	\begin{equation}
		\label{eqn:lyap:errorResidual}
		\frac{\|\fE_\indKS\|_2}{\|\fX_k\|_2} \;\leq\; \frac{\|\fR_\indKS\|_{\Frob}}{ \sigma_{\min}(\lyapOper)\|\fX_{\indKS}\|_2-\|\fR_{\indKS}\|_{\Frob}}.
	\end{equation}
\end{proposition}

\begin{proof}
	By \cite[Prop.~3.3]{ManU26} we have
	\begin{equation}\label{eqn:prop3.3}
		\|\fE_\indKS\|_2 \;\leq\; \frac{\|\fR_\indKS\|_{\Frob}}{\sigma_{\min}(\lyapOper)};
	\end{equation}
	and by the Weyl theorem for singular values, we get
	\begin{equation}\label{eqn:den}
   \|\fX_k\|_2\;=\;\|\fX_{\indKS}-\fE_{\indKS}\|_2\;\geq\;{\|\fX_{\indKS}\|_{2}-\|\fE_{\indKS}\|_{2}}\;\geq\;\|\fX_{\indKS}\|_{2}-\frac{\|\fR_\indKS\|_{\Frob}}{\sigma_{\min}(\lyapOper)}\;\geq\;0,
	\end{equation}
	where we used \eqref{eqn:prop3.3} and assumption \eqref{eqn:ass:prop}. Combining \eqref{eqn:prop3.3} and \eqref{eqn:den} to upper bound the relative error ${\|\fE_\indKS\|_2}{\|\fX_k\|^{-1}_2}$, we get \eqref{eqn:lyap:errorResidual}.
\end{proof}

The construction of the approximation $\fX_ \indKS$ via Krylov subspace methods is described in detail in \cite[Sec.~3.2.1]{ManU26}. These projection techniques are essential for solving large-scale Lyapunov equations efficiently; specifically, they enable the evaluation of the residual norm in \eqref{eqn:lyap:errorResidual} with a computational complexity of only $\calO(\stateDim)$ floating point operations and do not require storing full matrices of dimension $\stateDim$. Note furthermore that \eqref{eqn:ass:prop} can be viewed as a generic assumption since the residual will eventually converge towards zero as the dimension of the projection subspaces increases; therefore, \eqref{eqn:ass:prop} will be satisfied for some sufficiently large $\indKS^{\star}\in\N$.

From a computational perspective, the key challenge lies in evaluating $\sigma_{\min}(\lyapOper)$, since this typically requires at least $\calO(\stateDim^2)$ floating point operations. In \cite[Sec.~3.3.2]{ManU26}, the authors provide a detailed explanation of the conditions under which the computation of $\sigma_{\min}(\lyapOper)$ can be reduced to only $\calO(\stateDim)$ operations, owing to specific properties of $\fA$.

\subsubsection{Stopping criteria for \GLE based on a relative error bound}\label{subsubsec:exit:tol}
First, let us present the following auxiliary result.
\begin{proposition}
	\label{prop:InexactGramians}
	Suppose \Cref{ass:statIterGLE} is satisfied, and let $\fX$ and $\fX_k$ denote the unique solution of the \GLE~\eqref{eqn:generic:GLE:2} and its approximation at the $k$th iteration computed through \Cref{alg:statIterGLE}, respectively, where the Lyapunov equation~\eqref{alg:statIterGLE:lyap} is solved exactly. Then
	\begin{equation}\label{eq:56}
		\fX\; \succeq \;\fX_k\,, \quad\text{for all } k\in\N.
	\end{equation}
\end{proposition}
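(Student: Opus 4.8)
The plan is to show that the iterates $\fX_k$ produced by \Cref{alg:statIterGLE} (with exact inner Lyapunov solves) form a monotonically increasing sequence in the Loewner order that is bounded above by $\fX$, and that its limit is exactly $\fX$; the inequality \eqref{eq:56} then follows because the limit dominates every member of an increasing sequence. The key structural facts to exploit are: (i) each iterate is defined by a standard Lyapunov equation $\gleA\fX_k+\fX_k\gleA^\T = -\bigl(\sum_j \gleF_j\fX_{k-1}\gleF_j^\T + \tilde\fB_1\tilde\fB_1^\T\bigr)$ with asymptotically stable $\gleA$, so the solution operator $\fX_{k-1}\mapsto \fX_k$ is \emph{positivity-preserving and monotone} — if the right-hand side data are positive semidefinite, so is the solution, and larger data give larger solutions (this is the classical integral representation $\fX_k = \int_0^\infty \ee^{\gleA s}\bigl(\sum_j\gleF_j\fX_{k-1}\gleF_j^\T+\tilde\fB_1\tilde\fB_1^\T\bigr)\ee^{\gleA^\T s}\,\ds$); and (ii) each map $\fY\mapsto \gleF_j\fY\gleF_j^\T$ also preserves the semidefinite order. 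Composing these, the map $\Phi\colon\fY\mapsto \int_0^\infty \ee^{\gleA s}\bigl(\sum_j\gleF_j\fY\gleF_j^\T+\tilde\fB_1\tilde\fB_1^\T\bigr)\ee^{\gleA^\T s}\,\ds$ is a monotone affine map on $\Spsd{\stateDim}$, and $\fX_k = \Phi(\fX_{k-1})$, $\fX$ is the fixed point $\fX=\Phi(\fX)$.

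First I would establish monotonicity of the sequence. We have $\fX_1 = \Phi(\zeroMat)\succeq\zeroMat = \fX_0$ (taking $\fX_0\vcentcolon=\zeroMat$; note $\fX_1$ solves the pure Lyapunov equation with semidefinite right-hand side $\tilde\fB_1\tilde\fB_1^\T$). By induction, if $\fX_k\succeq\fX_{k-1}$ then monotonicity of $\Phi$ gives $\fX_{k+1}=\Phi(\fX_k)\succeq\Phi(\fX_{k-1})=\fX_k$. Next I would show the sequence is bounded above by $\fX$: since $\fX\succeq\zeroMat=\fX_0$ (positivity of $\fX$ itself follows from $\fX = \Phi(\fX)$ with semidefinite data, or is part of the standing setup), monotonicity of $\Phi$ gives inductively $\fX = \Phi(\fX)\succeq\Phi(\fX_{k-1}) = \fX_k$ for every $k\ge 1$, which is already exactly the claim \eqref{eq:56} — so in fact once positivity and monotonicity of $\Phi$ are in hand, the statement drops out in one induction without even needing convergence. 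I would still mention, for completeness, that \Cref{ass:statIterGLE} guarantees $\fX_k\to\fX$, so the increasing bounded sequence does converge to its supremum $\fX$.

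The main obstacle is justifying that the solution operator of $\gleA\fX+\fX\gleA^\T = -\fG$ is order-preserving and maps $\Spsd{\stateDim}$ into itself. This rests on the integral representation of the Lyapunov solution for stable $\gleA$, which requires precisely the asymptotic stability of $\gleA$ from \Cref{ass:statIterGLE}; with that in place, $\fG\succeq\zeroMat$ forces $\int_0^\infty\ee^{\gleA s}\fG\,\ee^{\gleA^\T s}\,\ds\succeq\zeroMat$ and $\fG_1\succeq\fG_2$ forces the corresponding ordering of the solutions by linearity. A secondary point to handle carefully is the very first iterate: \Cref{alg:statIterGLE} initializes with $\tilde\fB_1 = [\gleB_1,\ldots,\gleB_M]$ so $\fX_1$ solves $\gleA\fX+\fX\gleA^\T+\tilde\fB_1\tilde\fB_1^\T=\zeroMat$, and one should check that the recursion in the algorithm, $\tilde\fB_k = [\gleF_1\fZ_{k-1},\ldots,\gleF_M\fZ_{k-1},\tilde\fB_1]$, indeed encodes $\gleA\fX_k+\fX_k\gleA^\T+\sum_j\gleF_j\fX_{k-1}\gleF_j^\T+\tilde\fB_1\tilde\fB_1^\T=\zeroMat$ (this is exactly equation \eqref{eq30} in the proof of \Cref{prop2}), so that $\fX_k = \Phi(\fX_{k-1})$ as claimed. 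Everything else is routine induction.
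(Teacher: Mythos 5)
Your proposal is correct and is essentially the paper's own argument: the paper likewise sets $\fE_k=\fX-\fX_k$, subtracts \eqref{eq30} from \eqref{eqn:generic:GLE} to obtain $\gleA\fE_k+\fE_k\gleA^\T+\sum_j\gleF_j\fE_{k-1}\gleF_j^\T=\zeroMat$, and concludes $\fE_k\succeq\zeroMat$ by induction from the stability of $\gleA$ --- which is exactly your statement that the affine map $\Phi$ is monotone and $\fX$ is its fixed point. Your version merely makes explicit the integral representation $\int_0^\infty \ee^{\gleA s}(\cdot)\ee^{\gleA^\T s}\,\ds$ that the paper invokes implicitly, and adds the (unneeded but correct) monotone increase of the iterates.
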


\begin{proof}
	 Let $\fE_k=\fX-\fX_k$. Clearly, \eqref{eq:56} holds if and only if $\fE_k\succeq 0$ holds for all $k\in\N$. First, observe that $\fE_0\succeq 0$ since \Cref{alg:statIterGLE} is (implicitly) initialized with $\fX_0=\zeroMat$ and $\fX$ is symmetric positive semidefinite. Next, let us assume $\fE_{k-1}\succeq 0$ for some $k\in\N$. By subtracting from \eqref{eqn:generic:GLE:2} the equation
     \begin{equation}\label{eq30}
			\fA\fX_k + \fX_k\fA^\T + \sum_{j=1}^{M}\left(\fN_j\fX_{k-1}\fN_j^\T\right) + \fB\fB^\T = \zeroMat,
		\end{equation}
 which corresponds to \eqref{alg:statIterGLE:lyap} in \Cref{alg:statIterGLE}, we obtain
\begin{equation*}
	 \fA\fE_k+\fE_k\fA^\T+\sum_{j=1}^{M}\left(\fN_j\fE_{k-1}\fN_j^\T\right)=\zeroMat,
\end{equation*}
and since $\fE_{k-1}\succeq 0$ by assumption, it directly follows that $\fE_k\succeq 0$ by the fact that $\fA$ is asymptotically stable. Then, \eqref{eq:56} follows by the induction principle.
	\end{proof}
With this preparation, we can now derive the relative error bound that we use to formulate the stopping criterion in \Cref{alg:statIterGLE}.

\begin{theorem}[Relative error bound for the approximate solution of \GLE]
	\label{teo1}
	Suppose \Cref{ass:statIterGLE} holds. Let $\fX=\fZ\fZ^\T$ with $\fZ\in\R^{\stateDim\times\stateDim}$ be the unique solution of the \GLE \eqref{eqn:generic:GLE:2} and $\tilde{\fX}_k=\tilde{\fZ}_k\tilde{\fZ}_k^\T$, with $\tilde{\fZ}_k\in\R^{\stateDim\times\stateDim_k}$, be the approximate solution of the \GLE computed at iteration $k$ of \Cref{alg:statIterGLE} applying the subspace projection method to solve the Lyapunov equation in~\eqref{alg:statIterGLE:lyap}. Then
	\begin{align}
		\label{eq:50}
		\frac{\|\fX-\tilde{\fX}_k\|_2}{\|\fX\|_2}\; \le\; \frac{\sigma_{\min}(\lyapOper)\gamma \Delta_{k,k-1}+ (1+\gamma)\|\fR_k\|_{\Frob}+\gamma\|\fR_{k-1}\|_{\Frob}}{\sigma_{\min}(\lyapOper)\|\tilde \fX_{k}\|_{2}-\|\fR_k\|_{\Frob}},
	\end{align}
	where $\fR_k$ and $\fR_{k-1}$ are the residuals corresponding to the approximation of the Lyapunov equation~\eqref{alg:statIterGLE:lyap} at the $k$th and $(k-1)$th iterations of \Cref{alg:statIterGLE} due to the subspace approximation, respectively, $\gamma$ is defined as
    \begin{equation*}
		\gamma \;\vcentcolon=\; \frac{\|\calL^{-1}\gleOper\|}{1-\|\calL^{-1}\gleOper\|},
	\end{equation*}
    and
	\begin{equation}
		\Delta_{k,k-1}\; \vcentcolon=\; \|(\tilde \fZ_k-\tilde \fZ_{k-1})\tilde \fSigma_k^{\frac{1}{2}}\|_\Frob+\|(\tilde\fZ_k-\tilde\fZ_{k-1})\tilde\fSigma_{k-1}^{\frac{1}{2}}\|_\Frob,\quad\fZ_k\;=\;\tilde \fU_k\tilde\fSigma^{\frac{1}{2}}_k;
	\end{equation}
    being $\tilde\fSigma_k\in\R^{\stateDim_k\times\stateDim_k}$ a diagonal matrix for all $k\in\N$.
\end{theorem}

\begin{proof}
	The upper bound for $\|\fX-\tilde\fX_k\|_2$ is given by \cite[Thm.~3.6]{ManU26}. For the lower bound on $\|\fX\|_2$, we first note from \Cref{prop:InexactGramians} that $\|\fX\|_2\geq\|\fX_k\|_2$, and the lower bound for $\|\fX_k\|_2$ comes via \eqref{eqn:den} and \eqref{eqn:ass:prop}. Combining these bounds and rearranging the resulting inequality yields \eqref{eq:50}.
\end{proof}
To use \eqref{eq:50} as the stopping criterion for \Cref{alg:statIterGLE}, we first refer to the discussion in \cite[Sec.~3.2.2]{ManU26}, which allows us to set $\gamma=1$. Given this, thanks to \Cref{teo1}, if we require
\begin{align*}
	\|\fR_k\|_\Frob \le \frac{\zeta_1\sigma_{\min}(\lyapOper)\|\tilde \fX_k\|_2\tol}{3+\zeta_1\tol}, \quad
\Delta_{k,k-1}\le 	\frac{\zeta_2\tol\left(\sigma_{\min}(\lyapOper)\|\tilde\fX_k\|_2-\|\fR_k\|_{\Frob}\right)}{\sigma_{\min}(\lyapOper)},
\end{align*}
with $\zeta_1+\zeta_2=1$ and $\zeta_1$, $\zeta_2>0$, during \Cref{alg:statIterGLE}, we are guaranteed to obtain $\|\fX-\fX_k\|_2\|\fX\|^{-1}_2\le \tol$. 
%-----------------------------------------------------------------------------%

%-----------------------------------------------------------------------------%
\section{Numerical Experiments}
\label{sec:examples}
In this section, we present numerical experiments to assess the effectiveness of our method. We first examine a \SDAE derived from a constrained mass--spring--damper system. Next, to highlight the generality of the proposed framework, we study an artificial \SDAE that alternates between the constrained mass--spring--damper system and the semidiscretized unsteady Stokes equations. Using these benchmark problems, we illustrate that the developed \MOR technique, together with the numerics for solving the \GLE with relative error control, performs effectively.

All calculations were performed with \matlab~2024b on a MacBook Pro with an Apple M2 Pro processor and 16GB of RAM. 

\vspace{0.2cm}
\noindent\fbox{%
	\parbox{0.98\textwidth}{%
		The code and data used to generate the subsequent results are accessible via
		\begin{center}
			\url{https://zenodo.org/records/20056380}
		\end{center}
		under MIT Common License.
	}%
}\\[.2em]
%-----------------------------------------------------------------------------%
\subsection{Constrained mass--spring--damper system} 
\label{subsec:MSD}
We consider the holonomically constrained mass-spring-damper system presented in \cite[Sec.~4]{MehS05} with $g\in\N$ masses. The vibration of this system is described by the descriptor system
\begin{align}\label{eq34}
	\left\{\quad
	\begin{aligned}
		\dot{\fp}(t) &= \fv(t), & 
		\fM\dot{\fv}(t) &=\fK\fp(t)+\fD\fv(t)-\fG^\T\boldsymbol{\lambda}(t)+\fB_2\inp(t),\\
		\zeroVec &= \fG\fp(t),\\
		\out(t) &= \fC_1\fp(t),
	\end{aligned}\right.
\end{align}%
where $\fp(t) \in \R^g$ is the position vector, $\fv(t) \in \R^g$ is the velocity vector, $\flambda(t)\in \R$ is the Lagrange multiplier, $\fM = \text{diag}(m_1,\ldots,m_g)$ is the mass matrix, $\fD$ and $\fK$ are the tridiagonal damping and stiffness matrices, $\fG = [1, 0, \ldots,0, -1]\in \R^g$ is the constraint matrix, $\fB_2 = \fe_1$, and $\fC_1 = [ \fe_1, \fe_2, \fe_{g-1} ]^\T$, where $\fe_j$ denotes the $j$th column of the identity matrix $\fI_{g}$. The descriptor system arising from \eqref{eq34} is of index $\nu=3$, and its associated matrices read:
\begin{align}
	\label{eq:ref:con:mas:spr}
	\fE &\vcentcolon= \begin{bmatrix}
		\fI_g & \zeroMat & \zeroMat \\
		\zeroMat & \fM & \zeroMat \\
		\zeroMat & \zeroMat & \zeroMat
	\end{bmatrix}, &
	\fA &\vcentcolon= \begin{bmatrix}
		\zeroMat & \fI_g & \zeroMat \\
		\fK& \fD & -\fG^\T\\
		\fG & \zeroMat & \zeroMat \\
	\end{bmatrix}, &
	\fB &\vcentcolon= \begin{bmatrix}
		\zeroVec \\
		\fB_2\\
		\zeroVec \\
	\end{bmatrix}, &
	\fC\vcentcolon=  \begin{bmatrix}\fC_1^\T\\
	\zeroMat\\
	\zeroMat\\
	\end{bmatrix}^\T .
\end{align}
The state dimension of the system is $\stateDim=2g+1$, while the input and output dimensions are $\inpDim=1$ and $\outDim=3$. The specific setting of parameters is taken from \cite[Sec.~4]{MehS05}. Note that the matrices in \eqref{eq:ref:con:mas:spr} are sparse, and the kernel of $\fE$ is of dimension one. To obtain a switched system, we sample the matrices 
\begin{align}
	\label{eq:swt:mat}
	\fE_j &\vcentcolon= \fE+\calU_{[0,1]}\fE, &
	\fA_j &\vcentcolon= \fA + \begin{bmatrix}
		\zeroMat & \zeroMat & \zeroMat \\
		\zeroMat &\calU_{[0,0.35]} \fI_g & \zeroMat\\
		0.5{\fe_{j+1}^{\T}} & \zeroMat & \zeroMat \\
	\end{bmatrix}, &
	\fB_j &\vcentcolon= \fB +\begin{bmatrix}
		 \fe_j\\
		 \fe_j\\
		 \zeroMat
	\end{bmatrix}
\end{align}
for $j=2,\ldots,M$, with $j=1$ given by \eqref{eq:ref:con:mas:spr}, and where $\calU_{[a,b]}$ denotes the uniform distribution in the interval $[a,b]$. With these choices, all finite eigenvalues of $(\fE_j,\fA_j)$ have a negative real part; moreover, the impulsive contribution of the output function that originates from the differential part of the state is zero because $\calC_{i,j}$, as defined in \eqref{eqn:reformulation:SDAE}, vanishes for all $i,j = 1,\ldots,M$. 

We consider $M=5$ modes, and after decoupling each \DAE mode and thus reformulating the \SDAE as a switched system with jumps and impulses, we set the exit tolerance~$\tol$ for \Cref{alg:statIterGLE} to $10^{-12}$. We emphasize that the matrices remain sparse even after decoupling, so no implicit definition via matrix-vector products is needed for the efficient approximation of the \GLEs. Observe that, in order to accelerate the computations, we do not explicitly evaluate $\sigma_{\min}(\lyapOper)$, but instead approximate it by twice the smallest singular value of the matrix associated with the operator $\lyapOper$. This approximation is mathematically rigorous only when the underlying matrix is symmetric. Nevertheless, for the purpose of demonstrating the effectiveness of the proposed reduction procedure on a relatively large switched system, we adopt this approximation as a pragmatic choice. 

Once the approximate solutions of the \GLEs have been computed, we perturb them as in \cite[Sec.~4.4]{ManU26}. This procedure tightens the error bound \eqref{eq:err:est:3} by reducing (or even neglecting) the contribution arising from the inexact \LMIs. We next assess the accuracy of the resulting \ROM for $\stateDim=20$ by comparing it against the \FOM for $\stateDim=10001$ for the input functions
\begin{equation}\label{eqn:inp:CMSS}
	\inp_1(t) = 1
	\quad\text{and}\quad 	
	\inp_2(t) = \sin\left(2\pi\ee^\frac{t}{8}\right).
\end{equation}
In both cases, the switching signal is selected randomly. Denoting by $y_{\switch_k,i}$, for $i=1,\ldots,\outDim\nu_{\switch_{k}}$, the $i$-th component of the output vector $\out_{\switch_k}$, \Cref{fig1a} displays the results for the first input function and the second output component, while \Cref{fig1b} shows the values corresponding to the first output component and the second input function.
\begin{figure}[t]
	\centering
	\begin{subfigure}[t]{.45\linewidth}
		\input{img/CMSS_Plot_1.tex}
		\caption{$\inp_1(t)= 1$, $y_{\switch(t),2}(t)$ (\FOM), and $\bar{y}_{\switch(t),2}(t)$ (\ROM).}
		\label{fig1a}
	\end{subfigure}
	\hfil
	\begin{subfigure}[t]{.45\linewidth}
		\input{img/CMSS_Plot_2.tex}
		\caption{$\inp_2(t) =\sin(2\pi(\ee^{\frac{t}{8}}))$, $y_{\switch(t),1}(t)$ (\FOM), and $\bar{y}_{\switch(t),1}(t)$ (\ROM). Zoom on the top right corner to highlight the jumps.}
		\label{fig1b}
	\end{subfigure}
	\caption{Constrained mass-spring-damper system with $M=5$ modes. Qualitative comparison of the second (left) and first (right) output entry of the \FOM, with $\stateDim=10001$, and the \ROM, with $\stateDimRed=20$, for two different switching signals and input functions.}
	\label{fig1}%
\end{figure}
The plots illustrate that the \ROM obtained through the \PBR for \SDAE closely matches the \FOM in both cases. Furthermore, computing the outputs with the \ROM required only $0.17$ and $0.76$ seconds for $\inp_1$ and $\inp_2$, respectively, whereas the \FOM took $45$ and $741$ seconds. This corresponds to a speed-up factor between $280$ and $945$.

We next consider the accuracy in terms of the \ROM dimension. Recall that for a \ROM of dimension $\stateDimRed$, the associated error bound $\tau(\stateDimRed,\inp)$ is given in \eqref{eq:err:est:3}; in addition, let us define
\begin{align}
\eta(\inp)\;\vcentcolon=&\;\left(\int_{0}^{t}\|\inp(s)\|_2^{2}\,\ds\;+\;\sum_{k=1}^{K}\|\fU_{\switch_k}(t_k^{-})\|_2^2\right)^{\frac{1}{2}},\label{eqn:eta}\\
	\varepsilon(\stateDimRed,\inp)\;\vcentcolon=&\;{\left(\int_{0}^{t}\|\out_{\switch(s)}(s)-\bar \out_{\switch(s)}(s)\|^2_2\,\ds\right)^{\frac{1}{2}}}\eta(\inp)^{-1}\label{eqn:sys:err:b},\\
    \tau_{rel}(\stateDimRed,\inp)\;\vcentcolon=&\;\tau(\stateDimRed,\inp)\eta(\inp)^{-1}\label{eqn:sys:err:c}.
\end{align}

\begin{figure}[t]	
		\centering
		% This file was created by matlab2tikz.
%
\begin{tikzpicture}

\begin{axis}[%
width=\imageWidth,
height=\imageHeight,
at={(1.011in,0.642in)},
scale only axis,
grid=both,
grid style={line width=.1pt, draw=gray!10},
major grid style={line width=.2pt,draw=gray!50},
axis lines*=left,
axis line style={line width=\lineWidth},
mark size=2.2pt,
xmin=0,
xmax=55,
xlabel style={font=\color{white!15!black}},
xlabel={$\stateDimRed$},
ymode=log,
ymin=3e-15,
ymax=1e6,
yminorticks=true,
axis background/.style={fill=white},
legend style={%
	legend cell align=left, 
	align=left, 
	font=\tiny,
	draw=white!15!black,
	at={(0.98,0.98)},
	anchor=north east,},
]
\addplot [color=mycolor1, line width=\lineWidth, mark=o, mark options={solid, mycolor1}]
table [x index=0, y index=1, col sep=comma]
{img/DataCSV/CMSS_2a.csv};
\addlegendentry{$\varepsilon(\stateDimRed,\inp_1)$}

\addplot [color=mycolor2, dashed, line width=\lineWidth, mark=+, mark options={solid, mycolor2}]
table [x index=0, y index=2, col sep=comma]
{img/DataCSV/CMSS_2a.csv};
\addlegendentry{$\varepsilon(\stateDimRed,\inp_2)$}

\addplot [color=mycolor3, line width=\lineWidth, mark=o, mark options={solid, mycolor3}]
 table [x index=0, y index=3, col sep=comma]
{img/DataCSV/CMSS_2a.csv};
\addlegendentry{$\tau_{rel}(\stateDimRed,\inp_1)$}

\addplot [color=mycolor4, dashed, line width=\lineWidth, mark=+, mark options={solid, mycolor4}]
 table [x index=0, y index=4, col sep=comma]
{img/DataCSV/CMSS_2a.csv};
\addlegendentry{$\tau_{rel}(\stateDimRed,\inp_2)$}

\end{axis}
\end{tikzpicture}%
	\caption{Switched constrained mass-spring-damper system with $M=5$ modes. Decay of the reduction error \eqref{eqn:sys:err:b} and of the error estimate \eqref{eqn:sys:err:c} with respect to the \ROM dimension $\stateDimRed$ for the input functions in \eqref{eqn:inp:CMSS}.} 
    \label{fig2a}
\end{figure}\noindent
In \Cref{fig2a}, we show the decay of the reduction error \eqref{eqn:sys:err:b} along with that of the rescaled error estimate for both input functions given in \eqref{eqn:inp:CMSS}. We see that the error bound indeed lies above the actual computed error; nevertheless, for $\inp_2$, there is a discrepancy of several orders of magnitude between them. This gap is caused by the large magnitude of the derivatives of $\inp_2$, which enter the definition of $\eta(\inp)$ (see \eqref{eqn:eta}) through \eqref{eqn:def:jum:imp:c}. 

\subsection{An artificial \SDAE} 
\label{subsec:Stokes}
As a final numerical example, to illustrate the capability of our \MOR to handle the general class of \SDAE, we examine a \SDAE featuring two entirely unrelated models. The first one corresponds to the constrained mass-spring-damper system discussed in the previous section, while the second is obtained from a semidiscretization of the Stokes equations. The Stokes equations describe the flow of fluids at very low velocities without convection and coincide with the linearization of the Navier-Stokes equations around the zero-state. After a semi-discretization in space (see \cite{morwiki_stokes} which is based on \cite{Sch07}), we obtain the descriptor system
\begin{align}\label{eq42}
	\left\{\quad
	\begin{aligned}
		\dot{\fv}(t) &= \fA_{11}\fv(t) + \fA_{12}\frho(t) + \fB_1\inp(t),\\
		\zeroVec &= \fA^\T_{12}\fv(t) + \fB_2\inp(t),\\
		\out(t) &= \fC_1\fv(t) + \fC_2\frho(t), 
	\end{aligned}\right.
\end{align}
where $\fv(t)\in\R^{\stateDim_{\fv}}$ and $\frho(t)\in\R^{\stateDim_{\frho}}$ are the semidiscretized vectors of velocities and pressures, respectively. The \DAE~\eqref{eq42} has index two, and the order $\stateDim = \stateDim_{\fv} + \stateDim_{\frho}$ of system \eqref{eq42} depends on the fineness of the discretization and is usually very large, whereas the number $\inpDim$ of inputs and the number $\outDim$ of outputs are typically small. The system \eqref{eq42} can be expressed in the standard \DAE~form as
\begin{align}
	\label{eqn:ref:mat:ST}
	\fE &= \begin{bmatrix}
		\fI & \zeroMat \\
		\zeroMat & \zeroMat
	\end{bmatrix}, &
	\fA &= \begin{bmatrix}
		\fA_{11} & \fA_{12} \\
		\fA_{12}^\T & \zeroMat
	\end{bmatrix}, &
	\fB &= \begin{bmatrix}
		\fB_{1} \\
		\fB_{2}
	\end{bmatrix}, &
	\fC &= \begin{bmatrix}
		\fC_{1} & \fC_{2}
	\end{bmatrix}.
\end{align}
Thus, our \SDAE is obtained by alternating between \eqref{eq:ref:con:mas:spr} and \eqref{eqn:ref:mat:ST} with $\inpDim=1$ and $\outDim=3$; that is, between two \DAE of different indices, each describing a distinct physical phenomenon. We consider $\stateDim=1159$ for both systems, although modes of different sizes could be considered, provided that suitable transition matrices are available. This, after decoupling via \QWF, leads to $\stateDim_{\fJ_1}=1156$ (the constrained mass-spring-damper system) and $\stateDim_{\fJ_2}=361$ (for the semidiscretized Stokes). After approximating the solution of \eqref{eqn:GLE:c}, we compare the \FOM with the \ROM resulting for $\stateDimRed=20$ by considering the two input functions defined in \eqref{eqn:inp:CMSS} multiplied by the norm of $\fB$ in \eqref{eqn:ref:mat:ST}. We perform this rescaling of the input function since we noticed that normalizing the input matrix when it has a large norm has a positive effect on tightening the \LMIs. If the input matrix is normalized, to avoid altering the input-output map, one must multiply the input function by the original norm of the input matrix. 
\begin{figure}[t]
	\centering
	\begin{subfigure}[t]{.47\linewidth}
		\input{img/CMSS-ST_Plot_1a.tex}
		\caption{$\inp_1(t)= 1$, ${y}_{\switch(t),3}(t)$ (\FOM), and $\bar{y}_{\switch(t),3}(t)$ (\ROM).}
		\label{fig3a}
	\end{subfigure}
    \hfil
	\begin{subfigure}[t]{.47\linewidth}
		\input{img/CMSS-ST_Plot_1b}
		\caption{$\inp_2(t) =\sin(2\pi(\ee^{\frac{t}{8}}))$, $y_{\switch(t),1}(t)$ (\FOM), and $\bar{y}_{\switch(t),1}(t)$ (\ROM).}
		\label{fig3b}
	\end{subfigure}
	\caption{Artificial \SDAE example. Qualitative comparison of the third (left) and first (right) output entry of the \FOM, with $\stateDim=1159$, and the \ROM, with $\stateDimRed=20$, for two different switching paths and input signals. A magnified view is included in both figures to emphasize the accuracy of the approximation.}
	\label{fig3}%
\end{figure}

The switching sequence is again chosen with random switching times. The results are shown in \Cref{fig3a} and \ref{fig3b}, where the \ROM with $\stateDim=20$ closely matches the \FOM. We also note that computing the \ROM for both input functions was approximately $100$ times faster than the corresponding \FOM. 

\Cref{fig4a} shows how the error \eqref{eqn:sys:err:b} and the rescaled error bound \eqref{eqn:sys:err:c} decrease as functions of $\stateDimRed$ for both input functions. About \Cref{rmk-l2-linf}, we note that although the proposed methodology is specifically designed to bound the $L_2$-norm of the augmented output function, in our experiments, we did not observe larger errors at the switching times in practice.
\begin{figure}[t]
	\centering
    \begin{subfigure}[t]{.47\linewidth}
		% This file was created by matlab2tikz.
%
\begin{tikzpicture}

\begin{axis}[%
width=\imageWidth,
height=\imageHeight,
at={(1.011in,0.642in)},
scale only axis,
grid=both,
grid style={line width=.1pt, draw=gray!10},
major grid style={line width=.2pt,draw=gray!50},
axis lines*=left,
axis line style={line width=\lineWidth},
mark size=2.2pt,
xmin=0,
xmax=30,
xlabel style={font=\color{white!15!black}},
xlabel={$\stateDimRed$},
ymode=log,
ymin=5e-7,
ymax=1e2,
yminorticks=true,
axis background/.style={fill=white},
legend style={%
	legend cell align=left, 
	align=left, 
	font=\tiny,
	draw=white!15!black,
	at={(0.98,0.98)},
	anchor=north east,},
]
\addplot [color=mycolor1, line width=\lineWidth, mark=o, mark options={solid, mycolor1}]
table [x index=0, y index=1, col sep=comma]
{img/DataCSV/CMSS_3c.csv};
\addlegendentry{$\varepsilon(\stateDimRed,\inp_1 )$}

\addplot [color=mycolor2, dashed, line width=\lineWidth, mark=+, mark options={solid, mycolor2}]
table [x index=0, y index=3, col sep=comma]
{img/DataCSV/CMSS_3c.csv};
\addlegendentry{$\varepsilon(\stateDimRed, \inp_2)$}

\addplot [color=mycolor3, line width=\lineWidth, mark=o, mark options={solid, mycolor3}]
 table [x index=0, y index=5, col sep=comma]
{img/DataCSV/CMSS_3c.csv};
\addlegendentry{$\tau_{rel}(\stateDimRed,\inp_1)$}

\addplot [color=mycolor4, dashed, line width=\lineWidth, mark=+, mark options={solid, mycolor4}]
 table [x index=0, y index=7, col sep=comma]
{img/DataCSV/CMSS_3c.csv};
\addlegendentry{$\tau_{rel}(\stateDimRed,\inp_2)$}

\end{axis}
\end{tikzpicture}%
		\caption{Decay of the reduction error \eqref{eqn:sys:err:b} and of the error estimate \eqref{eqn:sys:err:c} with respect to the \ROM dimension $\stateDimRed$ for the input functions in \eqref{eqn:inp:CMSS}.}
		\label{fig4a}
	\end{subfigure}
    \hfil
	\begin{subfigure}[t]{.47\linewidth}
		% This file was created by matlab2tikz.
%
\begin{tikzpicture}

\begin{axis}[%
width=\imageWidth,
height=\imageHeight,
at={(1.011in,0.642in)},
scale only axis,
grid=both,
grid style={line width=.1pt, draw=gray!10},
major grid style={line width=.2pt,draw=gray!50},
axis lines*=left,
axis line style={line width=\lineWidth},
mark size=2.2pt,
xmin=0,
xmax=30,
xlabel style={font=\color{white!15!black}},
xlabel={$\stateDimRed$},
ymode=log,
ymin=8e-7,
ymax=3e-2,
yminorticks=true,
axis background/.style={fill=white},
legend style={%
	legend cell align=left, 
	align=left, 
	font=\tiny,
	draw=white!15!black,
	at={(0.98,0.98)},
	anchor=north east,},
]
\addplot [color=mycolor1, line width=\lineWidth, mark=o, mark options={solid, mycolor1}]
table [x index=0, y index=1, col sep=comma]
{img/DataCSV/CMSS_3c.csv};
\addlegendentry{$\varepsilon(\stateDimRed,\inp_1 )$}

\addplot [color=mycolor2, dashed, line width=\lineWidth, mark=+, mark options={solid, mycolor2}]
table [x index=0, y index=2, col sep=comma]
{img/DataCSV/CMSS_3c.csv};
\addlegendentry{$\varepsilon(\stateDimRed, \inp_1)$ (no imp. )}

\addplot [color=mycolor3, line width=\lineWidth, mark=o, mark options={solid, mycolor3}]
 table [x index=0, y index=3, col sep=comma]
{img/DataCSV/CMSS_3c.csv};
\addlegendentry{$\varepsilon(\stateDimRed,\inp_2)$}

\addplot [color=mycolor4, dashed, line width=\lineWidth, mark=+, mark options={solid, mycolor4}]
 table [x index=0, y index=4, col sep=comma]
{img/DataCSV/CMSS_3c.csv};
\addlegendentry{$\varepsilon(\stateDimRed,\inp_2)$ (no imp. )}

\end{axis}
\end{tikzpicture}%
		\caption{Comparison between the proposed \ROM and the one obtained without including the impulsive matrices in the \GLEs}
		\label{fig4b}
	\end{subfigure}
	\caption{Artificial \SDAE example. Decay of the associated error and error bound (left) and error comparison with \ROM obtained without including the impulsive matrices in the \GLEs.}
	\label{fig4}%
\end{figure}
Finally, in order to showcase the importance of including the impulsive components of the switched system via the augmented input and output matrices, we display in \Cref{fig4} the comparison between the errors obtained for the two switching signals and input functions using the \ROM obtained from approximating the solutions of \eqref{eqn:GLE:c} and a \ROM obtained via \GLEs that only accounts for $\tilde \fB_j $ and $\tilde \fC_j$, for $j=1,\ldots,M$. The plot clearly demonstrates that incorporating the impulsive components often results in reduced error and does not lead to any deterioration in accuracy.

%-----------------------------------------------------------------------------%
\section{Conclusions and perspectives}

In this work, we introduce a novel projection-based \MOR method for \SDAE equipped with a posteriori error bounds to assess the quality of the approximation. The proposed \MOR method is based on the \PBR for standard switched systems introduced in \cite[Sec.~4]{ManU26}. To this end, and after providing the necessary theoretical justification, we reformulate the \SDAE as an \SLS with jumps at the switching times and with augmented input and output matrices; see \Cref{sec:MORforDAE}. 

Since the \PBR for \SDAE relies on the solution of several pairs of \GLEs, we employ the solver described in \cite[Sec.~3]{ManU26} and additionally derive new stopping criteria that allow us to rigorously bound the relative error in the approximation of the \GLE solution; see \Cref{subsubsec:exit:tol}. 

The effectiveness and accuracy of the proposed method are demonstrated through numerical experiments. In particular, we show that the proposed methodology can handle systems of \SDAE in which each mode corresponds to physical models of completely different natures; see \Cref{subsec:Stokes}.

A bottleneck of the proposed method is its dependence on decomposing each \DAE mode into separate differential and impulsive subsystems. Efficiently obtaining such a decomposition for broad classes of \DAEs, while preserving sparsity properties when they are present, was not addressed in this work and thus remains a clearly relevant and interesting open research problem. Furthermore, the effective use of stopping criteria based on the relative-error upper bound is hindered by the need to compute the smallest singular value of the Lyapunov operator. A potential remedy that could be investigated consists of exploiting efficient $\varepsilon$-pseudospectrum evaluation techniques to derive error bounds that circumvent the computation of the smallest singular value of $\stateDim^2 \times \stateDim^2$ matrices; see \cite{ManMG24} and the discussion in \cite[App.~A]{ManU26}.
	
%-----------------------------------------------------------------------------%
\section*{Acknowledgment}
MM acknowledges funding by the BMBF (grant no.~05M22VSA) and support by the Stuttgart Center for Simulation Science (SimTech).
BU is partly funded by the Deutsche Forschungsgemeinschaft (DFG, German Research Foundation) -- Project-ID 258734477 -- SFB 1173.

%-----------------------------------------------------------------------------%
\bibliographystyle{plain-doi}
\bibliography{journalAbbr,literature}

\end{document}